\def\dist {{\rm dist}}
\def\Frac {{\rm dim}}
\def\dom {{\rm dom}}
\def\x{\boldsymbol{x}}
\def\z{\boldsymbol{z}}
\def\eti{\boldsymbol{\eta}}
\def\xei{\boldsymbol{\xi}}
\def\A {\boldsymbol{A}}
\def\B {\boldsymbol{B}}
\def\eps {\varepsilon}
\def\d {{\rm d}}
\def\e {{\rm e}}
\def\I {{\mathfrak I}}
\def\R {\mathbb{R}}
\def\Q {Q}
\def\q {q}
\def\W {{\mathcal W}}
\def\H {{\mathcal H}}
\def\V {{\mathcal V}}
\def\Bcal {{\mathcal B}}
\def\BB {{{\mathbb B}}}
\def\C {{\mathcal C}}
\def\D {{\mathfrak D}}
\def\E {{\mathfrak E}}
\def\K {{\mathcal K}}
\def\T {{\mathbb T}}
\def\LL {{\boldsymbol{\Lambda}}}
\def\S {{\mathcal S}}
\def\M {{\mathcal M}}
\def \l {\langle}
\def \r {\rangle}
\def \pt {\partial_t}
\def \ptt {\partial_{tt}}
\def \HB {\mathbb{H}}
\def \HH {{\rm H}}
\def\ddt{\frac{\d}{\d t}}
\newtheorem{proposition}{Proposition}[section]
\newtheorem{theorem}[proposition]{Theorem}
\newtheorem{lemma}[proposition]{Lemma}
\theoremstyle{definition}
\newtheorem{assumption}[proposition]{Condition}
\newtheorem{remark}[proposition]{Remark}
\numberwithin{equation}{section}
\def \au {\rm}
\def \ti {\it}
\def \jou {\rm}
\def \bk {\it}
\def \no#1#2#3 {{\bf #1} (#3), #2.}
\def \eds#1#2#3 {#1, #2, #3.}
\title[Exponential attractors for equations with memory]
{Exponential attractors for abstract equations\\ with memory
and applications to viscoelasticity}
\author[V. Danese, P.G. Geredeli and V. Pata]
{Valeria Danese, Pelin G. Geredeli and Vittorino Pata}
\address{Politecnico di Milano - Dipartimento di Matematica ``F.\ Brioschi''
\newline\indent
Via Bonardi 9, 20133 Milano - Italy}
\email{valeria.danese@polimi.it {\rm (V. Danese)}}
\email{vittorino.pata@polimi.it {\rm (V. Pata)}}
\address{Department of Mathematics, Faculty of Science
\newline\indent
Hacettepe University, Beytepe 06800, Ankara - Turkey}
\email{pguven@hacettepe.edu.tr {\rm (P.G. Geredeli)}}
\thanks{P.G.G.\ was partially supported by the {\it Scientific Research Projects Coordination Unit of Hacettepe University, Ankara.}}
\subjclass[2000]{35B41, 37L30, 45K05, 74D99}
\keywords{Equations with memory, exponential attractors, past history, minimal state}
\begin{document}

\begin{abstract}
We consider an abstract equation with memory of the form
$$\pt \x(t)+\int_{0}^\infty k(s) \A\x(t-s)\d s+\B\x(t)=0$$
where $\A,\B$ are operators acting on some Banach space, and the convolution kernel $k$ is a
nonnegative convex summable function of unit mass.
The system is translated into an ordinary differential equation on a Banach space accounting for the presence of memory,
both in the so-called history space framework and in the minimal state one.
The main theoretical result is a theorem providing sufficient conditions in order for the related solution semigroups to possess
finite-dimensional exponential attractors.
As an application, we prove the existence of exponential attractors for the integrodifferential equation
$$\ptt u - h(0)\Delta u - \int_{0}^\infty h'(s) \Delta u(t-s)\d s+ f(u) = g$$
arising in the theory of isothermal viscoelasticity, which is just a particular concrete realization
of the abstract model, having defined the new kernel
$h(s)=k(s)+1$.
\end{abstract}

\maketitle

\section{Introduction}
\label{Sec1}

\noindent
A large class of physical phenomena in which
delay effects occur, such as
viscoelasticity, population dynamics or heat flow in real conductors,
are modeled by equations with memory, where the dynamics is influenced
by the past history of the variables in play through a convolution integral.
Given a real Banach space $X$,
the general structure of an equation with memory
in the unknown $\x=\x(t):\R\to X$ reads as follows:
\begin{equation}
\label{BASEL}
\pt \x(t)
+\int_{0}^\infty k(s) \A\x(t-s)\d s+\B\x(t)=0.
\end{equation}
Here, the convolution (or memory) kernel $k$ is a nonnegative
summable function
of total mass
$$
\int_0^\infty k(s)\d s=1,
$$
having the explicit form
$$
k(s)=\int_s^\infty \mu(y)\d y,
$$
where $\mu\in L^1(\R^+)$ is a nonincreasing
(hence nonnegative) piecewise absolutely continuous function, possibly
unbounded about zero.
The discontinuity points of $\mu$, if any,
form an increasing sequence $s_n$, which can be either finite or $s_n\to\infty$.
Assuming without loss of generality $\mu$ right-continuous,
we denote by
$$\mu_n=\mu(s_n^-)-\mu(s_n)>0$$
the jump amplitudes at the (left) discontinuity points $s_n$.
Besides, $\mu$ is supposed to satisfy for every $t,s>0$ and
some $\Theta\geq 1$ and $\delta>0$ the inequality
\begin{equation}
\label{NEC}
\mu(t+s)\leq \Theta \e^{-\delta t}\mu(s).
\end{equation}
Concerning $\A$ and $\B$, they are (possibly nonlinear) operators
densely defined on $X$. For any fixed time $t$,
the operator $\B$ acts on $\x(t)$, the instantaneous value of $\x$,
while
$\A$ acts on the past history of $\x$, namely,
the past values of $\x$ up to $t$.
The variable $\x(t)$ is supposed to solve the equation in some weak sense for every
$t>0$, whereas it is regarded as a known initial datum for negative times.
Accordingly, \eqref{BASEL} is supplemented with
the ``initial condition"
\begin{equation}
\label{BASEID}
\x(-s)={\boldsymbol{h}}(s),\quad\forall s\geq 0,
\end{equation}
where ${\boldsymbol{h}}:[0,\infty)\to X$
is a given function accounting for the {\it initial} past history of $\x$.

A common feature of equations arising from concrete physical models is the
presence of dissipation mechanisms. In this perspective,
here we are mostly interested to the longterm behavior
of solutions. As we will show in a while, it is possible
to translate the original problem within a semigroup framework. Accordingly,
the dissipativity properties of~\eqref{BASEL} can be described in terms of
``small" sets of the phase space able to eventually capture the trajectories
of the related solution semigroup $S(t)$ acting on a suitable Banach space $\H$.
Dealing with semigroups, an important object is
the {\it global attractor} (see e.g.\ \cite{BV,CV,HAL,HAR,ROB,TEM}), whose existence has been
proved for several models with memory. Instead, our main goal is to discuss the existence
of {\it exponential attractors}, otherwise called {\it inertial sets},
firstly introduced in~\cite{EFNT} in a Hilbert space setting (see~\cite{EMZ} for the Banach space case),
which have the advantage of being more stable than global attractors,
and attract trajectories exponentially fast (see~\cite{ABS,MZ} for a detailed discussion).
By definition, an exponential attractor for a semigroup $S(t)$ acting on a Banach space $\H$
is a compact set $\E\subset\H$ satisfying the following properties:
\begin{itemize}
\item[$\bullet$] $\E$ is positively invariant for the semigroup, namely,
$S(t)\E\subset\E$ for all $t\geq 0$.
\smallskip
\item[$\bullet$] $\E$ has finite fractal dimension in $\H$.
\smallskip
\item[$\bullet$] $\E$ is exponentially attracting for $S(t)$, i.e.\ there exist
an exponential rate $\omega>0$
and a nondecreasing positive function $\Q$ such that
$$\dist_\H(S(t)\Bcal,\E)\leq \Q(\|\Bcal\|_\H)\e^{-\omega t}
$$
for every bounded subset $\Bcal\subset\H$.
\end{itemize}

\smallskip
Recall that the fractal dimension of $\E$ in $\H$ is defined as
$$\Frac_{\H}(\E)=\limsup_{r\to 0}
\frac{ \ln N(r)}{\ln \frac1r},$$
where $N(r)$ is
the smallest number of $r$-balls of $\H$ necessary to cover $\E$, whereas
$$\dist_\H(\Bcal_1,\Bcal_2)=\sup_{b_1\in\Bcal_1}\inf_{b_2\in \Bcal_2}
\|b_1-b_2\|_\H$$
denotes the standard Hausdorff semidistance in $\H$ between two
sets $\Bcal_1$ and $\Bcal_2$.

\smallskip
Indeed, although the existence of global attractors for equations with memory has been investigated
in several papers, there are considerably fewer results concerning exponential attractors.
This is mainly due to the technical difficulties arising in the application of
the classical techniques to this particular framework.

\subsection*{Outline of the paper} As a first step,
we show how to translate the original
integro\-\-differential problem~\eqref{BASEL}
into a dynamical system within two different frameworks,
the so-called {\it past history} framework and the {\it minimal state} one.
This is done in a heuristic way
in the next Sec.\ \ref{SecTrEq}, and it is then formalized in the proper mathematical setting
in Sec.\ \ref{SECSOL}, upon defining suitable functional spaces (see
Sec.\ \ref{SECGEN}). This leads to the generation of two strongly continuous semigroups $S(t)$
(on the past history space) and $\hat S(t)$ (on the minimal state space)
describing the solutions to~\eqref{BASEL}. Sec.\ \ref{SECEXPH} is devoted to the main
abstract result of this work: namely, we provide sufficient conditions in order for the semigroup $S(t)$
to possess a regular exponential attractor. As a corollary, in Sec.\ \ref{SECEXPS} the analogous
result is shown to hold for the semigroup $\hat S(t)$.
In the second part of the paper,
we discuss an application of the abstract theorems to the well-known equation
of viscoelasticity (see Sec.\ \ref{SecVISCO} and Sec.\ \ref{SecLAST}).

\section{The Transformed Equation}
\label{SecTrEq}

\noindent
In order to view~\eqref{BASEL} as a dynamical system,
two different strategies have been devised.

\subsection{The past history framework}
Using a method proposed by Dafermos \cite{DAF} (see also \cite{FAR}),
we introduce for $(t,s)\in[0,\infty)\times\R^+$
the {\it summed past history}
$$
\eti^t(s)=\int_0^s \A\x(t-y)\d y,
$$
which (formally) fulfills the differential equation of hyperbolic type
$$\pt \eti^t(s)=-\partial_s\eti^t(s) +\A\x(t),$$
subject to the ``boundary condition"
$$\lim_{s\to 0}\eti^t(s)=0.$$
A formal integration by parts yields the equality
$$
\int_0^\infty k(s)\A\x(t-s)\d s
=\int_0^\infty \mu(s)\eti^t(s)\d s.
$$
Hence, the original equation \eqref{BASEL} translates into the evolution system
in the unknown variables $\x=\x(t)$ and $\eti=\eti^t(s)$
\begin{equation}
\label{SYSM}
\begin{cases}
\pt \x(t)+\displaystyle\int_0^\infty \mu(s) \eti^t(s)\d s+\B\x(t)=0,\\
\noalign{\vskip1mm}
\pt\eti^t(s)=-\partial_s\eti^t(s)+\A\x(t).
\end{cases}
\end{equation}
In turn, the initial condition \eqref{BASEID} becomes
$$
\begin{cases}
\x(0)=\x_0,\\
\eti^0=\eti_0,
\end{cases}
$$
where we set
$$
\x_0={\boldsymbol{h}}(0)
\qquad\text{and}\qquad
\eti_0(s)=\int_0^s \A {\boldsymbol{h}}(y)\d y.
$$

\begin{remark}
If $\A$ is linear, and so it commutes with the integral,
one usually defines $\eti$ in a slightly different way,
namely,
$\eti^t(s)=\int_0^s \x(t-y)\d y$.
Clearly, \eqref{SYSM} changes accordingly.
\end{remark}

\subsection{The minimal state framework}
Although it successfully allows to view the original integrodifferential equation
as an abstract differential equation, the past history framework suffers from a structural theoretical drawback. Indeed,
it is clear that the dynamics is known once $\x_0$ and $\eti_0$ are assigned. This means that
the initial past history $\boldsymbol{h}$ may not be recovered from the future evolutions,
hence it has no possibilities to be an observable quantity.
To cope with this fact, an alternative approach has been proposed
(see~\cite{DD,DFG,FGP}), based on the notion of {\it minimal state}: an additional
variable accounting for the past history
which contains the necessary and sufficient information determining the
future dynamics. Precisely, for $(t,\tau)\in[0,\infty)\times\R^+$, we define
$$\xei^t(\tau)=\int_0^\infty\mu(\tau+s)\A\x(t-s)\d s,$$
which (again, formally) satisfies the relations
$$
\pt\xei^t(\tau)=\partial_\tau \xei^t(\tau)+\mu(\tau)\A\x(t)
$$
and
$$
\int_0^\infty \xei^t(\tau)\d \tau=\int_0^\infty k(s)\A\x(t-s)\d s.
$$
Hence, \eqref{BASEL} takes the form
\begin{equation}
\label{SYSS}
\begin{cases}
\displaystyle
\pt \x(t)+\displaystyle\int_0^\infty \xei^t(\tau)\d \tau+\B\x(t)=0,\\
\noalign{\vskip1mm}
\pt \xei^t(\tau)=\partial_\tau \xei^t(\tau)+\mu(\tau) \A\x(t),
\end{cases}
\end{equation}
and the initial condition \eqref{BASEID}
translates into
$$
\begin{cases}
\x(0)=\x_0,\\
\xei^0=\xei_0,
\end{cases}
$$
where we set
$$
\x_0={\boldsymbol{h}}(0)
\qquad\text{and}\qquad
\xei_0(\tau)=\int_0^\infty\mu(\tau+s)\A {\boldsymbol{h}}(s)\d s.
$$

\begin{remark}
Such a description meets the sought minimality requirement.
Indeed, once the initial state $\xei_0$ is assigned, we can write
$$
\xei^t(\tau)=\xei_0(t+\tau)+\int_0^t\mu(\tau+s)\A\x(t-s)\d s.
$$
Then, plugging the latter equality into the first equation of~\eqref{SYSS},
we deduce for every $t\geq 0$
the relation
$$\int_t^\infty\xei_0(\tau)\d\tau=G(t),
$$
for some function $G$ depending only on the values of $\x(t)$ for $t$ positive.
Hence, the
knowledge of $\x(t)$ for all $t\geq 0$ uniquely determines
$\xei_0$.
\end{remark}

\begin{remark}
Similarly to the past history case, if $\A$ is linear it is customary
to define instead
$\xei^t(\tau)=\int_0^\infty\mu(\tau+s)\x(t-s)\d s$,
and change~\eqref{SYSS} accordingly.
\end{remark}

\section{Functional Setting and Notation}
\label{SECGEN}

\noindent
We first introduce the abstract functional setting
needed to carry out our analysis.

\subsection{Notation}
We denote by $\I$ the set of nondecreasing
functions $\Q:[0,\infty)\to [0,\infty)$, and by $\D$ the set of nonincreasing
functions $\q:[0,\infty)\to [0,\infty)$ vanishing at infinity.
Besides, given a Banach space $\H$ and $r\geq 0$, we set
$$\BB_\H(r)=\{z\in \H:\, \|z\|_\H\leq r\}.$$

\subsection{Geometric spaces}
Let $X^0,X^1$ and $Y^0,Y^1$ be reflexive Banach spaces having
dense and compact embeddings
$$X^1\Subset X^0
\qquad\text{and}\qquad Y^1\Subset Y^0.
$$

\subsection{Memory spaces}
For $\imath=0,1$, we introduce
the {\it memory spaces}
$$
\M^\imath=L^2_{\mu}(\R^+;Y^\imath),
$$
namely, the spaces of $L^2$-functions on $\R^+$ with values in $Y^\imath$
with respect to the measure $\mu(s)\d s$,
normed by
$$
\|\eti\|^2_{\M^\imath}
=\int_0^\infty\mu(s)
\|\eti(s)\|^2_{Y^\imath}\d s.
$$
If $Y^\imath$ is a Hilbert space, so is $\M^\imath$.
Albeit clearly continuous, the
embedding $\M^1\subset \M^0$ is not compact (see \cite{PZ} for a counterexample).
We will also consider the linear operator
$$T\eti=-\eti',
\qquad
\dom(T)=\big\{\eti\in\M^0:\,\eti'\in\M^{0},\,\,\eti(0)=0\big\},
$$
where the {\it prime} stands for weak derivative and $\eti(0)=\lim_{s\to 0}\eti(s)$ in $Y^0$.
It can be verified that $T$ is
the infinitesimal generator of the right-translation
semigroup $R(t)$ on $\M^0$, defined as
$$
(R(t)\eti)(s)=
\begin{cases}
0 & s\leq t,\\
\eti(s-t) & s>t.
\end{cases}
$$
Finally, for $\imath=0,1$, we define the {\it extended memory spaces}
$$\H^\imath= X^\imath\times\M^\imath,
$$
which are Banach spaces with respect to the norms
$$\|(\x,\eti)\|_{\H^\imath}^2=\|\x\|_{X^\imath}^2
+\|\eti\|_{\M^\imath}^2,$$
and we call $\Pi_1$ and $\Pi_2$
the projections of $\H^0$
onto its components $X^0$ and $\M^0$,
namely,
$$\Pi_1 (\x,\eti)=\x,\qquad \Pi_2 (\x,\eti)=\eti.$$

\subsection{State spaces}
We define the nonnegative kernel $\nu$ as
$$
\nu(\tau)
= 1/\mu(\tau),
$$
where $\nu(0)=\lim_{\tau\to 0}1/\mu(\tau)$,
and we agree to set $\nu(\tau)=0$ whenever $\mu(\tau)=0$, in order to include
the finite delay case in our discussion.
The assumptions on $\mu$ imply that $\nu$ is nondecreasing and
piecewise absolutely continuous.
In particular, \eqref{NEC} yields
\begin{equation}
\label{NECnu}
\nu(\tau-s)\leq \Theta \e^{-\delta s}\nu(\tau), \quad\forall s<\tau,
\end{equation}
provided that $\nu(\tau)>0$.
Then, for $\imath=0,1$, we introduce the {\it (minimal) state spaces}
$$\S^{\imath}=L^2_\nu(\R^+;Y^\imath)$$
with norms
$$\|\xei\|^2_{\S^{\imath}}
=\int_0^{\infty}\nu(\tau)\|\xei(\tau)\|^2_{Y^\imath}\d\tau,$$
along with the {\it extended state spaces}
$$
\V^\imath = X^\imath\times \S^\imath
$$
normed by
$$\|(\x,\xei)\|_{\V^\imath}^2=\|\x\|_{X^\imath}^2
+\|\xei\|_{\S^\imath}^2.$$
Besides, let $P$ be the infinitesimal generator
of the left-translation semigroup on $\S^0$,
namely,
$$P\xei=\xei',\qquad
\dom(P)=\big\{\xei\in\S^0:\,\xei'\in\S^0\big\}.$$
Abusing the notation, we keep denoting by $\Pi_1$ and $\Pi_2$
the projections of $\V^0$
onto its components $X^0$ and $\S^0$, respectively.

\section{The Solution Semigroups}
\label{SECSOL}

\noindent
We are now in the position to rephrase \eqref{SYSM} and \eqref{SYSS}
within the correct functional setting.

\subsection{The past history framework}
We interpret \eqref{SYSM} as the ODE in $\H^0$
in the unknown variables $\x=\x(t)$ and $\eti=\eti^t(s)$
\begin{equation}
\label{SYM}
\begin{cases}
\pt \x(t)+\displaystyle\int_0^\infty \mu(s) \eti^t(s)\d s+\B\x(t)=0,\\
\noalign{\vskip1mm}
\pt\eti^t=T\eti^t+\A\x(t).
\end{cases}
\end{equation}
Throughout this work,
we assume that \eqref{SYM} has a unique global solution in some weak sense
for every initial datum $\z=(\x_0,\eti_0)\in\H^0$.
This is the same as saying that~\eqref{SYM} generates a semigroup
$$S(t):\H^0\to\H^0.$$
We do not require any continuity in time, although in most concrete cases one has
$$t\mapsto S(t)\z\in\C([0,\infty),\H^0),\quad\forall\z\in\H^0.$$
In particular, once $\x(t)$ is known, the second component $\eti^t=\Pi_2 S(t)\z$ of the solution
is recovered as a Duhamel integral, hence it fulfills the explicit representation formula
\begin{equation}
\label{REPETA}
\eti^t(s)
=(R(t)\eti_0)(s)+\int_0^{\min\{t,s\}}\A\x(t-y)\d y.
\end{equation}
Besides, $S(t)$ is supposed to satisfy the following H\"older continuity property:
there exists $\Q\in\I$ and $\kappa=\kappa(r)\in(0,1]$ such that
\begin{equation}
\label{contM}
\|S(t)\z_1-S(t)\z_2\|_{\H^0}\leq \Q(r)\e^{\Q(r)t}\|\z_1-\z_2\|_{\H^0}^\kappa,
\end{equation}
whenever $\z_1,\z_2\in \BB_{\H^0}(r)$.

\subsection{The minimal state framework}
In a similar manner, we interpret \eqref{SYSS} as the ODE in $\V^0$
in the unknown variables $\x=\x(t)$ and $\xei=\xei^t(\tau)$
\begin{equation}
\label{SYS}
\begin{cases}
\pt \x(t)+\displaystyle\int_0^\infty \xei^t(\tau)\d \tau+\B\x(t)=0,\\
\noalign{\vskip1mm}
\pt \xei^t=P \xei^t+\mu \A\x(t).
\end{cases}
\end{equation}
Again, for every initial datum $\hat\z=(\x_0,\xei_0)\in\V^0$,
we assume that \eqref{SYS} generates a semigroup
$$\hat S(t):\V^0\to\V^0.$$
Hence, arguing as in the previous case, the second component $\xei^t=\Pi_2\hat S(t)\hat \z$ of the solution fulfills
the explicit representation formula
\begin{equation}
\label{REP}
\xei^t(\tau)=\xei_0(t+\tau)+\int_0^t\mu(\tau+s)\A\x(t-s)\d s.
\end{equation}
Finally, we suppose that
there exists $\Q\in\I$ and $\kappa=\kappa(r)\in(0,1]$ such that
\begin{equation}
\label{contS}
\|\hat S(t)\hat\z_1-\hat S(t)\hat\z_2\|_{\V^0}\leq \Q(r)\e^{\Q(r)t}\|\hat\z_1-\hat\z_2\|_{\V^0}^\kappa,
\end{equation}
whenever $\hat\z_1,\hat\z_2\in \BB_{\V^0}(r)$.

\subsection{The map $\LL$}
The connection between the memory and the state spaces
has been devised in \cite{comarpa,FGP}.
Setting for any $\eti\in\M^0$
$$
(\Lambda \eti) (\tau)=-\int_0^\infty\mu'(\tau+s)\eti(s)\d s
+\sum_{\tau<s_n}\mu_n\eti(s_n-\tau),
$$
the following lemma is proved.

\begin{lemma}
\label{TTTH}
The map $\LL$ defined by
$$(\x,\eti)\mapsto \LL (\x,\eti)=(\x,\Lambda\eti)$$
is a bounded linear operator of unitary norm from $\H^\imath$ into $\V^\imath$.
Moreover, for every $\eti\in\M^0$ and every $\tau>0$, we have the equality
\begin{equation}
\label{qqqqqqqq}
\int_0^\infty\mu(\tau+s)\eti(s)\d s=\int_{\tau}^\infty (\Lambda\eti)(y)\d y=(\Lambda\boldsymbol{H})(\tau),
\end{equation}
where $\boldsymbol{H}(s)=\int_0^s\eti(y)\d y$.
\end{lemma}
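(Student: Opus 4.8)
The plan is to observe first that $\LL$ is linear and leaves the $X^\imath$-component untouched, so that $\|\LL(\x,\eti)\|_{\V^\imath}^2=\|\x\|_{X^\imath}^2+\|\Lambda\eti\|_{\S^\imath}^2$ whereas $\|(\x,\eti)\|_{\H^\imath}^2=\|\x\|_{X^\imath}^2+\|\eti\|_{\M^\imath}^2$. Consequently the whole first assertion collapses to the single scalar inequality $\|\Lambda\eti\|_{\S^\imath}\leq\|\eti\|_{\M^\imath}$, valid for every $\eti\in\M^\imath$: this yields $\|\LL\|\leq1$, while testing on $(\x,0)$ immediately gives $\|\LL\|\geq1$, hence unitary norm.

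To prove that inequality I would first recast the definition of $\Lambda$ as a single Stieltjes integral. Since $\mu$ is nonincreasing, right-continuous and piecewise absolutely continuous, the function $s\mapsto-\mu(\tau+s)$ is nondecreasing, and its differential $\d_s[-\mu(\tau+s)]$ consists of the absolutely continuous density $-\mu'(\tau+s)$ together with an atom of mass $\mu_n$ at each point $s=s_n-\tau$ with $s_n>\tau$. With this reading,
$$(\Lambda\eti)(\tau)=\int_0^\infty\eti(s)\,\d_s\big[-\mu(\tau+s)\big],$$
and the crucial structural fact is that this nonnegative measure has total mass exactly $\mu(\tau)$, because $-\mu(\tau+s)$ increases from $-\mu(\tau)$ at $s=0$ to $0$ as $s\to\infty$.

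From here the estimate is a pointwise Jensen inequality followed by Fubini. For fixed $\tau$ with $\mu(\tau)>0$, Jensen (equivalently Cauchy--Schwarz) applied to the probability measure $\mu(\tau)^{-1}\d_s[-\mu(\tau+s)]$ gives $\|(\Lambda\eti)(\tau)\|_{Y^\imath}^2\leq\mu(\tau)\int_0^\infty\|\eti(s)\|_{Y^\imath}^2\,\d_s[-\mu(\tau+s)]$; multiplying by $\nu(\tau)$ and using the identity $\nu(\tau)\mu(\tau)=1$ cancels the weight, the degenerate case $\mu(\tau)=0$ being vacuous since then $\nu(\tau)=0$. Integrating in $\tau$ and exchanging the order of integration by Tonelli reduces the claim to showing that the total $\tau$-mass charged onto a fixed $s$ equals exactly $\mu(s)$. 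This is the step that demands careful handling of the singular part of $\d\mu$: the absolutely continuous part contributes, by the fundamental theorem of calculus for the jump function $\mu$, the quantity $\mu(s)-\sum_{s<s_n}\mu_n$, while the atoms contribute precisely the missing $\sum_{s<s_n}\mu_n$, so that the two recombine into $\mu(s)$ and deliver $\|\Lambda\eti\|_{\S^\imath}^2\leq\|\eti\|_{\M^\imath}^2$.

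It remains to establish the double equality \eqref{qqqqqqqq}. For the rightmost identity I would integrate by parts in the Stieltjes sense against the continuous primitive $\boldsymbol{H}$: since $\boldsymbol{H}(0)=0$, $\boldsymbol{H}'=\eti$, and $\mu(\tau+s)\to0$, no boundary or jump corrections survive and $(\Lambda\boldsymbol{H})(\tau)=\int_0^\infty\boldsymbol{H}(s)\,\d_s[-\mu(\tau+s)]=\int_0^\infty\mu(\tau+s)\eti(s)\,\d s$. For the leftmost identity I would integrate the measure representation of $(\Lambda\eti)(y)$ over $y\in[\tau,\infty)$ and apply Fubini; once more the atomic and continuous contributions must be tracked together, but the two families of jump terms cancel against one another and what is left is exactly $\int_0^\infty\mu(\tau+s)\eti(s)\,\d s$. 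The recurring and genuinely delicate point throughout is precisely this bookkeeping of the atoms $\mu_n$ alongside the density $-\mu'$: every exchange of integrals and every integration by parts succeeds only because these two pieces reassemble $\mu$ exactly, and the decay assumption \eqref{NEC} (equivalently \eqref{NECnu}) supplies the integrability that legitimizes each Fubini step and the vanishing of the term at infinity.
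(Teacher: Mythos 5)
The paper does not actually prove Lemma \ref{TTTH}; it quotes it from \cite{comarpa,FGP}, so there is no in-paper argument to compare against. Your proof is correct and follows the standard route used in those references: reading $\Lambda\eti$ as integration against the nonnegative measure $\d_s[-\mu(\tau+s)]$ of total mass $\mu(\tau)$, applying Jensen/Cauchy--Schwarz, and letting Tonelli reassemble the absolutely continuous density $-\mu'$ and the atoms $\mu_n$ into $\mu$ itself, with \eqref{NEC} justifying the integrability and the vanishing boundary term in the Stieltjes integration by parts. The reduction of the norm claim to $\|\Lambda\eti\|_{\S^\imath}\leq\|\eti\|_{\M^\imath}$ and the test on $(\x,0)$ for the lower bound are both sound.
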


\begin{remark}
However, it should be observed that, in general, the map $\LL$ is neither 1-to-1 nor onto.
\end{remark}

The link between the two formulations
is detailed in the next lemma (cf.\ \cite{comarpa,FGP}),
showing in particular that the state approach describes the dynamics
in greater generality.

\begin{lemma}
\label{lemme}
For every $\z\in\H^0$,
the following equality holds:
$$\hat S(t) \LL \z=\LL S(t)\z.$$
\end{lemma}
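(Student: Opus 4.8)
The plan is to exploit the uniqueness of solutions that is built into the semigroup $\hat S(t)$. Writing $S(t)\z=(\x(t),\eti^t)$, linearity of $\LL$ gives $\LL S(t)\z=(\x(t),\Lambda\eti^t)$, so it suffices to prove that the pair $(\x(t),\Lambda\eti^t)$ is \emph{the} solution of the minimal state system \eqref{SYS} issued from the datum $\LL\z=(\x_0,\Lambda\eti_0)$; the desired identity $\hat S(t)\LL\z=\LL S(t)\z$ then follows at once. Since $\eti^0=\eti_0$, the initial condition is matched automatically, and the argument reduces to checking that $(\x(t),\Lambda\eti^t)$ fulfills the two defining relations of a solution to \eqref{SYS}: the first equation, and the Duhamel representation \eqref{REP} for the second component. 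Note that this route sidesteps any separate verification that the first components agree, since uniqueness will deliver it for free.

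First I would dispose of the equation for $\x$. The pair $(\x(t),\eti^t)$ solves \eqref{SYM}, whose first line carries the memory term $\int_0^\infty\mu(s)\eti^t(s)\d s$. Choosing $\tau=0$ in the identity \eqref{qqqqqqqq} of Lemma \ref{TTTH} yields
$$\int_0^\infty\mu(s)\eti^t(s)\d s=\int_0^\infty(\Lambda\eti^t)(\tau)\d\tau,$$
so that the first equation of \eqref{SYM} coincides verbatim with the first equation of \eqref{SYS} written for the pair $(\x(t),\Lambda\eti^t)$. This step is computationally free, yet it is the conceptual core: it is precisely relation \eqref{qqqqqqqq} that makes the two memory terms interchangeable.

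The substantial part is verifying that $\Lambda\eti^t$ obeys \eqref{REP}, namely $(\Lambda\eti^t)(\tau)=(\Lambda\eti_0)(t+\tau)+\int_0^t\mu(\tau+s)\A\x(t-s)\d s$. Here I would insert the explicit formula \eqref{REPETA} for $\eti^t$ and split $\Lambda\eti^t$, by linearity, into the contribution of the shifted initial history $R(t)\eti_0$ and that of the Duhamel part $\boldsymbol{G}^t(s)=\int_0^{\min\{t,s\}}\A\x(t-y)\d y$. Rather than differentiating $\Lambda$ directly, I would test each contribution against the tail integral $\int_\tau^\infty(\,\cdot\,)(y)\d y$ and use \eqref{qqqqqqqq} to rewrite these tails as moments of $\mu$. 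For the first piece the change of variable $\sigma=s-t$ together with \eqref{qqqqqqqq} applied at $\tau+t$ gives $\int_\tau^\infty\Lambda(R(t)\eti_0)(y)\d y=\int_{\tau+t}^\infty(\Lambda\eti_0)(y)\d y$, identifying it with the tail of $\tau\mapsto(\Lambda\eti_0)(t+\tau)$. For the second piece, \eqref{qqqqqqqq} and a Fubini exchange (using $\int_y^\infty\mu(\tau+s)\d s=k(\tau+y)$) reduce $\int_\tau^\infty\Lambda\boldsymbol{G}^t(y)\d y$ to $\int_0^t k(\tau+y)\A\x(t-y)\d y$, which is exactly the tail of $\tau\mapsto\int_0^t\mu(\tau+s)\A\x(t-s)\d s$. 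Since two functions of $\S^0$ with equal tail integrals for every $\tau\ge0$ coincide almost everywhere, this establishes \eqref{REP} for $\Lambda\eti^t$ with the \emph{same} first component $\x(t)$.

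With both defining relations verified, $(\x(t),\Lambda\eti^t)$ is a solution of \eqref{SYS} emanating from $\LL\z$, and the assumed uniqueness forces $(\x(t),\Lambda\eti^t)=\hat S(t)\LL\z$, which is the claim. I expect the main obstacle to be the clean handling of the tail-integral computation for the Duhamel piece: one must justify the Fubini exchange in the $Y^0$-valued, $\mu$-weighted setting and check that the jump part of $\Lambda$, carried by the terms $\mu_n\eti(s_n-\tau)$, is correctly absorbed through \eqref{qqqqqqqq}, so that no boundary contributions at the discontinuities $s_n$ are lost. The remaining delicate point is the passage from equality of tail integrals to equality in $\S^0$, which is where the integrability afforded by $\eti^t\in\M^0$ and $\Lambda\eti^t\in\S^0$ guaranteed by Lemma \ref{TTTH} is needed.
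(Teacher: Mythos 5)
Your argument is correct, but note that the paper itself supplies no proof of this lemma: it is simply quoted from \cite{comarpa,FGP}, so there is no internal argument to measure yours against. What you propose is the standard verification underlying those references, and it hangs together: you show that $\LL$ carries the solution of \eqref{SYM} issued from $\z$ to a solution of \eqref{SYS} issued from $\LL\z$, and then invoke the uniqueness that the paper assumes when postulating that \eqref{SYS} generates the semigroup $\hat S(t)$. The two pillars are exactly right — the identity \eqref{qqqqqqqq} at $\tau=0$ makes the memory terms of the two first equations coincide, and the same identity, applied separately to $R(t)\eti_0$ and to the Duhamel part of \eqref{REPETA}, shows that the tail integrals of $\Lambda\eti^t$ agree with those of the right-hand side of \eqref{REP}, whence the representation formula holds for $\Lambda\eti^t$. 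Your computations for both pieces check out (the change of variable for the translated history and the Fubini exchange producing $k(\tau+y)$ are both legitimate, since elements of $\S^0$ are integrable on $\R^+$ by Cauchy--Schwarz against $\mu\in L^1$, which also justifies recovering a.e.\ equality from equality of tails). The only point worth stating explicitly in a written version is that \eqref{REP} is the mild (Duhamel) form of the second equation of \eqref{SYS} — the paper states the implication ``solution $\Rightarrow$ \eqref{REP}'' but you need the converse to certify your candidate as a solution before uniqueness can be applied; this is harmless at the paper's level of rigor, where the second component is by construction recovered as a Duhamel integral, but it should be said.
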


\section{Exponential Attractors: The Past History Framework}
\label{SECEXPH}

\subsection{Statement of the theorem}
Our main goal is to provide a ``user friendly" recipe establishing sufficient conditions
in order for the semigroup $S(t)$ acting on $\H^0$ to possess
a regular exponential attractor $\E$. Such a result is specifically tailored
for equations with memory: as it will be shown in the final application,
the abstract hypotheses are easily translated in concrete terms.
A similar attempt has been made in~\cite{Relax}, but only for a particular case of our general equation~\eqref{BASEL}.
In principle, this theorem can be inferred from the previous work~\cite{GMPZ},
dealing with the existence of a H\"older continuous family of exponential attractors $\E_\eps$
for a family of semigroups $S_\eps(t)$
depending on a perturbation parameter $\eps\in [0,1]$; see \cite[Theorem~4.4]{GMPZ}.
Nonetheless, for the basic (but still very interesting) case of a single semigroup
of memory type, it is not at all immediate how to derive a clean and simple
statement of wide application from~\cite{GMPZ}. Accordingly, we will give the proof in some detail.

\begin{theorem}
\label{THM-ExpM}
In addition to our general assumptions, suppose that the following hold:
\begin{enumerate}
\item[{\bf (i)}] There exists $R_1>0$ such that, given any $r\geq 0$,
$$
\| S(t)\z\|_{\H^1} \leq \q_r(t) + R_1
$$
for some $\q_r \in \D$ and every $\z \in \BB_{\H^1}(r)$.
\smallskip
\item[{\bf (ii)}] There is $r_1>0$ such that the ball
$\BB_{\H^1}(r_1)$
is exponentially attracting for $S(t)$.
\smallskip
\item[{\bf (iii)}] The operator $\boldsymbol A$ maps bounded subsets of $X^1$ into bounded subsets of $Y^0$.
\smallskip
\item[{\bf (iv)}] There exist $p\in(1,\infty]$ and $\Q \in \I$ such that,
for every $r \geq 0$ and every $\theta >0$ sufficiently large,
$$
\| \pt \x\|_{L^p(\theta,2\theta;X^0)}\leq \Q(r + \theta)
$$
for all $\x(t)=\Pi_1 S(t)\z$ with $\z \in \BB_{\H^1}(r)$.
\smallskip
\item[{\bf (v)}] For every $r>0$ there are $\Q_r \in \I$ and
$\q_r \in \D$ such that, for all $\z_1, \z_2 \in \BB_{\H^1}(r)$,
$$
S(t)\z_1- S(t)\z_2 = L(t, \z_1, \z_2) + K(t, \z_1,\z_2),
$$
where the maps $L$ and $K$ satisfy
\begin{align*}
\| L(t, \z_1, \z_2)\|_{\H^0} &\leq \q_r(t) \| \z_1 - \z_2\|_{\H^0}, \\
\| K(t, \z_1, \z_2) \|_{\H^1} &\leq \Q_r(t)\| \z_1 - \z_2\|_{\H^0}.
\end{align*}
Moreover, let $\bar\eti^t=\Pi_2 K(t,\z_1, \z_2)$
fulfill the Cauchy problem
$$
\begin{cases}
\partial_t \bar\eti^t = T\bar\eti^t + \boldsymbol{w}(t),\\
\bar\eti^0 = 0,
\end{cases}
$$
for some function $\boldsymbol{w}$ satisfying the estimate
$$
\|\boldsymbol{w}(t)\|_{Y^0} \leq \Q_r(t)\| \z_1 - \z_2 \|_{\H^0}.
$$
\end{enumerate}
Then $S(t)$ has an exponential attractor $\E$ contained in $\BB_{\H^1}(r_1)$.
\end{theorem}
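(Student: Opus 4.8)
The plan is to pass to a discrete dynamical system, build a discrete exponential attractor through the classical contraction-plus-compactness scheme, and then lift it back to the semigroup. The feature that dictates the somewhat technical hypothesis \textbf{(v)} is that the embedding $\M^1\subset\M^0$ is \emph{not} compact, so boundedness of the memory variable in the stronger norm does not, by itself, yield precompactness in the phase space $\H^0$; this must instead be extracted from the transport structure of the memory equation together with the decay \eqref{NEC}.

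First I would isolate a set $\Bcal_0\subset\BB_{\H^1}(r_1)$, bounded in $\H^1$, positively invariant and exponentially attracting in $\H^0$: condition \textbf{(ii)} provides exponential attraction toward $\BB_{\H^1}(r_1)$, while \textbf{(i)} keeps forward orbits of $\BB_{\H^1}(r_1)$ bounded in $\H^1$ (asymptotically by $R_1$), so that the $\H^0$-closure of such an orbit after a large entrance time furnishes $\Bcal_0$. Carrying out the entire construction on $\Bcal_0$ will guarantee $\E\subset\BB_{\H^1}(r_1)$ at the end. I would then fix a time $t_\ast>0$ and study the discrete map $\mathbb S=S(t_\ast)$: by the splitting in \textbf{(v)}, $\mathbb S\z_1-\mathbb S\z_2=L(t_\ast,\z_1,\z_2)+K(t_\ast,\z_1,\z_2)$, and choosing $t_\ast$ large enough that $\q_r(t_\ast)<\tfrac14$ makes $L$ a strict $\H^0$-contraction, whereas $K$ obeys $\|K\|_{\H^1}\le\Q_r(t_\ast)\|\z_1-\z_2\|_{\H^0}$.

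The decisive step is to show that the (normalised) range of $K(t_\ast,\cdot,\cdot)$ is precompact in $\H^0$. Its $X^0$-component is precompact at once, since $X^1\Subset X^0$. For the memory component $\bar\eti^{t_\ast}=\Pi_2 K$ the analogous embedding fails, and here I would exploit precisely the Cauchy problem prescribed in \textbf{(v)}: solving $\pt\bar\eti^t=T\bar\eti^t+\boldsymbol w(t)$ with $\bar\eti^0=0$ by Duhamel, as in \eqref{REPETA}, gives $\bar\eti^{t_\ast}(s)=\int_0^{\min\{t_\ast,s\}}\boldsymbol w(t_\ast-y)\,\d y$. Consequently $\bar\eti^{t_\ast}(0)=0$ and $T\bar\eti^{t_\ast}=-\boldsymbol w(t_\ast-\cdot)$ on $(0,t_\ast)$ and $0$ beyond, so the bound on $\boldsymbol w$ controls $\|T\bar\eti^{t_\ast}\|_{\M^0}$ by $\|\z_1-\z_2\|_{\H^0}$, while the $K$-estimate controls $\|\bar\eti^{t_\ast}\|_{\M^1}$. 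Thus $\bar\eti^{t_\ast}$ lies in a set that is bounded in $\M^1$, contained in $\dom(T)$, and has $T$-image bounded in $\M^0$; invoking the compactness criterion for memory spaces --- whose proof relies on \eqref{NEC}/\eqref{NECnu} to control the tail --- delivers precompactness in $\M^0$. Condition \textbf{(iii)} enters here in bounding the relevant source terms in $Y^0$.

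With $\mathbb S$ satisfying the contraction--compactness property on $\Bcal_0$, the standard abstract machinery (in the spirit of~\cite{EMZ}) produces a set $\E_\ast\subset\Bcal_0$ that is $\mathbb S$-invariant, exponentially attracting for $\{\mathbb S^n\}$, and of finite fractal dimension in $\H^0$. I would finally define $\E=\overline{\bigcup_{t\in[0,t_\ast]}S(t)\E_\ast}$; the H\"older continuity \eqref{contM} in the initial data, combined with the time regularity \textbf{(iv)} (which, as $p>1$, makes $t\mapsto\Pi_1 S(t)\z$ H\"older continuous in time), gives joint H\"older continuity of the flow map on $[0,t_\ast]\times\E_\ast$, so the union over $[0,t_\ast]$ keeps the fractal dimension finite; positive invariance and exponential attraction for the continuous semigroup are then routine. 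The genuine obstacle is the precompactness step: all remaining ingredients are a fairly standard adaptation of the exponential-attractor scheme, but recovering precompactness of the smoothing part in $\H^0$ \emph{without} the compact embedding $\M^1\Subset\M^0$ is exactly what forces the memory-tailored hypothesis \textbf{(v)} and the kernel decay \eqref{NEC}.
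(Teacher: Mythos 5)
Your overall architecture --- an invariant set bounded in the higher norm, a discrete map split into an $\H^0$-contraction plus a smoothing part, the classical discrete exponential attractor, and a H\"older lifting to continuous time --- is exactly the paper's, and you correctly identify the non-compactness of $\M^1\subset\M^0$ as the crux. But the step where you resolve it fails as stated. You claim that a set which is bounded in $\M^1$, contained in $\dom(T)$, and has $T$-image bounded in $\M^0$ is automatically precompact in $\M^0$, the tails being handled ``inside the proof of the criterion'' via \eqref{NEC}. No such criterion holds. Take $\mu(s)=\e^{-s}$ (which satisfies \eqref{NEC} with $\Theta=1$), fix $0\neq v\in Y^1$ and a smooth bump $\phi$ supported in $[0,1]$, and set $\eti_n(s)=\e^{n/2}\phi(s-n)v$. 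Then $\|\eti_n\|_{\M^1}$ and $\|\eti_n'\|_{\M^0}$ are bounded uniformly in $n$ and $\eti_n(0)=0$, yet the $\eti_n$ have disjoint supports with $\|\eti_n\|_{\M^0}\equiv c>0$, so no subsequence converges in $\M^0$ (indeed $n\,\T(n;\eti_n)\to\infty$). The compactness lemma actually available (\cite{PZ}, Lemma 5.5) requires, as a \emph{hypothesis}, the uniform tail bound $\sup_{y\geq 1}y\,\T(y;\eti)<\infty$; this is precisely why the paper introduces the space $\K$, the functional $\HB$ (which contains $\sup_{y\geq1}y\T(y;\cdot)$ in addition to $\|\eti'\|^2_{\M^0}$), and Lemma \ref{Lemma-CPS} (from \cite{CPS}) to propagate that tail bound along solutions of $\pt\eti^t=T\eti^t+\boldsymbol{w}(t)$.

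The gap is repairable with the very formula you wrote: since $\bar\eti^{t_*}(s)=\int_0^{t_*}\boldsymbol{w}(t_*-y)\,\d y$ is constant for $s\geq t_*$, one has $\|\bar\eti^{t_*}(s)\|_{Y^0}\leq t_*\Q_r(t_*)\|\z_1-\z_2\|_{\H^0}$ for all $s$, hence $\T(y;\bar\eti^{t_*})\leq C\,k(y)\,\|\z_1-\z_2\|^2_{\H^0}$ with $k(y)$ exponentially small by \eqref{NEC}; this is in substance what Lemma \ref{Lemma-CPS} delivers. But you must state and use this tail estimate explicitly --- it does not follow from the three properties you listed. Two further points in the lifting step also need repair. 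First, the invariant set on which the discrete scheme runs must itself be bounded in the tail-controlled space $\W=X^1\times\K$ (the paper's Lemmas \ref{lem-ii-w} and \ref{lem-expattr-w}), not merely in $\H^1$. Second, your interval $[0,t_*]$ is problematic: hypothesis (iv) gives temporal H\"older continuity of $\Pi_1 S(t)\z$ only on $[\theta,2\theta]$ for $\theta$ large, and the temporal continuity of the memory component $\eti^t$ is not addressed by (iv) at all --- it requires a separate argument through the representation formula \eqref{REPETA} and (iii), yielding $\|\eti^{t_1}-\eti^{t_2}\|_{\M^0}\leq C(t_1-t_2)$ for $t_1>t_2\geq t^*$; this is why the paper evolves $\E_{\rm d}$ over $[t^*,2t^*]$ rather than $[0,t^*]$.
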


\subsection{Proof of Theorem \ref{THM-ExpM}}
A first difficulty, in connection with the higher order estimate of point (v),
comes from the lack of compactness of the embedding $\H^1 \subset \H^0$,
for the embedding $\M^1 \subset \M^0$ is not compact either.
In order to bypass this obstacle, we introduce the more regular memory space
$$
\K = \Big\{ \eti \in \M^1 \,:\, \eti' \in \M^{0},\, \eti(0) = 0,\,\, \sup_{y \geq 1} y\T(y;\eti) < \infty \Big\},
$$
where
$$
\T(y;\eti) = \int_y^\infty \mu(s)\| \eti(s) \|^2_{Y^0}\d s, \quad y \geq 1,
$$
denotes the {\it tail function} of $\eti$.
Setting
$$
\HB(\eti) = \| \eti' \|^2_{\M^0} + \sup_{y \geq 1} y\T(y;\eti),
$$
$\K$ turns out to be a Banach space with the norm
$$
\| \eti \|^2_{\K} = \| \eti \|^2_{\M^1} + \HB(\eti),
$$
thus $\K$ is continuously embedded into $\M^1$.
Although $\K$ might not be reflexive, it can be proved that its closed balls
are closed in $\M^0$ (see \cite[Proposition 5.6]{GMPZ}).
Moreover, owing to \cite[Lemma 5.5]{PZ}, we have the compact embedding
$\K \Subset \M^0$.
Finally, we introduce the product space
$$
\W = X^1 \times \K\Subset \H^0.
$$
We will need a technical lemma, which easily follows (up to inessential modifications)
by collecting Lemma 3.3 and Lemma 3.4 in \cite{CPS}.

\begin{lemma}
\label{Lemma-CPS}
Given ${\rm T}\in(0,\infty]$, let $\eti_0 \in \M^0$ be such that
$\eti_0(0) = 0$ and
$\,\HB(\eti_0) < \infty$,
and let $\eti = \eti^t(s)$ be the solution to the Cauchy problem on $I_{\rm T}$ (where $I_{\rm T}=[0,{\rm T}]$ if
${\rm T}<\infty$ and $I_{\rm T}=[0,\infty)$ otherwise)
$$
\begin{cases}
\pt \eti^t = T\eti^t + \boldsymbol{w}(t), \\
\eti^{0} = \eti_0.
\end{cases}
$$
Assume that there exist $\q\in\D$ and $K\geq 0$ such that
$$
\| \boldsymbol{w}(t) \|^2_{Y^0} \leq \q(t)+K.
$$
Then, for all $t\in I_{\rm T}$, we have that $\eti^t(0)=0$ and
$$
\HB(\eti^t) \leq C_0 \e^{-\frac\delta2 t} + M K.
$$
Here, $C_0\geq 0$ depends on $\HB(\eti_0)$, $\q$ and $K$,
whereas $M>0$ is a universal constant and $\delta >0$ is given by \eqref{NEC}.
In particular, if  $\eti_0=0$ and $\q\equiv 0$, then $C_0=0$.
\end{lemma}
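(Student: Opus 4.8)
The plan is to avoid any energy argument and work directly from the Duhamel representation of the solution, which reduces the whole statement to the estimate of a few explicit $\mu$-weighted integrals. Since $T$ generates the right-translation semigroup $R(t)$ on $\M^0$ and the forcing $\boldsymbol{w}(t)$ enters as a (constant-in-$s$) element of $Y^0$, the solution is
$$\eti^t(s)=(R(t)\eti_0)(s)+\int_0^{\min\{t,s\}}\boldsymbol{w}(t-\sigma)\,\d\sigma,$$
exactly as in \eqref{REPETA}. From this formula the boundary value $\eti^t(0)=0$ is read off at once: the translated datum vanishes at $s=0$ for $t>0$ and equals $\eti_0(0)=0$ for $t=0$, while the integral is empty at $s=0$. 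Differentiating in $s$ one finds $\partial_s\eti^t(s)=\boldsymbol{w}(t-s)$ for $0<s<t$ and $\partial_s\eti^t(s)=\eti_0'(s-t)$ for $s>t$, so that each of the two summands of $\HB(\eti^t)$ splits into an \emph{initial-datum part}, carried by $\eti_0$ translated to the right by $t$, and a \emph{source part}, carried by $\boldsymbol{w}$.

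First I would treat $\|(\eti^t)'\|_{\M^0}^2$. Its $\{s>t\}$ portion equals $\int_0^\infty\mu(t+\sigma)\|\eti_0'(\sigma)\|_{Y^0}^2\,\d\sigma$, which by \eqref{NEC} is at most $\Theta\e^{-\delta t}\|\eti_0'\|_{\M^0}^2\le\Theta\e^{-\delta t}\HB(\eti_0)$. Its $\{0<s<t\}$ portion equals $\int_0^t\mu(s)\|\boldsymbol{w}(t-s)\|_{Y^0}^2\,\d s$; inserting $\|\boldsymbol{w}(t-s)\|_{Y^0}^2\le\q(t-s)+K$, the $K$-term contributes at most $K\|\mu\|_{L^1}$, which is the origin of the $MK$ summand, while the $\q$-term is split at $s=t/2$, using $\q(t-s)\le\q(t/2)$ on $[0,t/2]$ and the exponentially small tail $\int_{t/2}^\infty\mu(s)\,\d s\le\Theta\e^{-\delta t/2}\|\mu\|_{L^1}$ from \eqref{NEC} on $[t/2,t]$. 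Together with the $\eti_0$-part this produces a vanishing remainder that I would absorb into $C_0\e^{-\delta t/2}$ (with $C_0$ depending on $\q$).

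The delicate point, and the step I expect to be the main obstacle, is the tail functional $\sup_{y\ge1}y\,\T(y;\eti^t)$. Plugging the representation into $\T(y;\eti^t)=\int_y^\infty\mu(s)\|\eti^t(s)\|_{Y^0}^2\,\d s$, one sees that for $s>t$ the value $\eti^t(s)=\eti_0(s-t)+\int_0^t\boldsymbol{w}(\sigma)\,\d\sigma$ carries a cumulative source term whose norm may grow linearly in $t$, while for $s<t$ the value $\int_{t-s}^t\boldsymbol{w}(\sigma)\,\d\sigma$ has norm of order $\sqrt{K}\,s$. The mechanism that saves the estimate is that these polynomially growing factors are always integrated against weighted tails of the form $\int_y^\infty s^{j}\mu(s)\,\d s$, and \eqref{NEC} forces each such tail to decay exponentially in $y$ (in particular it guarantees finiteness of all moments of $\mu$ and a bound $\int_y^\infty s^{j}\mu(s)\,\d s\le\Theta\e^{-\delta y}P_j(y)$ for a suitable polynomial $P_j$). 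Multiplying by $y$ and taking the supremum over $y\ge1$ then converts the $K$-part into a clean $MK$ bound and the $\q$- and $\eti_0$-parts into a decaying remainder. The real work is this balancing act—dominating the secularly growing source integrals and the weighted boundary contributions by the exponential smallness of the $\mu$-tails, uniformly in $y\ge1$, while accounting for the possible unboundedness of $\mu$ near $0$ (harmless because $\mu\in L^1$ and the competing factors $s^{j}$ vanish there) and for its jumps. Collecting the three bounds yields $\HB(\eti^t)\le C_0\e^{-\delta t/2}+MK$ with $M$ universal and $C_0$ depending only on $\HB(\eti_0)$, $\q$ and $K$, and with $C_0=0$ when $\eti_0=0$ and $\q\equiv0$. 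Since all of this is contained, up to the inessential modifications noted, in Lemmas 3.3 and 3.4 of \cite{CPS}, I would carry out these computations concisely and defer the full details to that reference.
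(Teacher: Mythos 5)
The paper never actually proves this lemma: it is imported, ``up to inessential modifications,'' from Lemmas 3.3 and 3.4 of \cite{CPS}, so there is no in-paper argument to compare against. Your sketch --- the Duhamel representation, the splitting of $(\eti^t)'$ and of the tail functional into a right-translated initial-datum part and a source part, and the control of the secularly growing cumulative source by the exponentially decaying weighted tails $\int_y^\infty s^j\mu(s)\,\d s$ that \eqref{NEC} provides --- is exactly the mechanism behind those two cited lemmas, and you close by deferring to the same reference. In substance you are doing what the paper does, with a correct outline of the computation laid on top; in particular you correctly identify the tail functional as the only delicate point and the moment bounds from \eqref{NEC} as what saves it.

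One step of your outline does not go through as written, and it is worth isolating because it reflects an overstatement in the lemma itself. After splitting $\int_0^t\mu(s)\q(t-s)\,\d s$ at $s=t/2$, the piece over $[0,t/2]$ is only bounded by $\q(t/2)\,k(0)$, and you propose to ``absorb'' this into $C_0\e^{-\delta t/2}$. For a general $\q\in\D$ this is impossible: take $\eti_0=0$, $K=0$, $\q(t)=1/\ln(\e+t)$ and $\boldsymbol{w}(t)=\sqrt{\q(t)}\,y_0$ with $\|y_0\|_{Y^0}=1$; then $\|(\eti^t)'\|^2_{\M^0}=\int_0^t\mu(s)\q(t-s)\,\d s\geq \q(t)\int_0^t\mu(s)\,\d s$, which decays subexponentially, so no constant $C_0$ can work. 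What the computation honestly yields is $\HB(\eti^t)\leq \q_0(t)+MK$ for some $\q_0\in\D$ depending on $\HB(\eti_0)$, $\q$ and $K$, with $\q_0$ exponentially decaying precisely when $\q$ is (in particular $\q_0\equiv 0$ when $\eti_0=0$ and $\q\equiv0$). This weaker form suffices everywhere the lemma is invoked: Lemma \ref{lem-ii-w} only needs a $\D$-type decay, and the other two applications take $\q\equiv 0$. So either prove the $\D$-version or add the hypothesis that $\q$ decays exponentially; as it stands, that absorption is the one step of your argument that would fail.
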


We now proceed to the proof by stating two more lemmas.

\begin{lemma}
\label{lem-ii-w}
There exists $R_2>0$ such that, given any $r\geq 0$,
$$
\| S(t)\z \|_\W \leq \q_r(t) + R_2
$$
for some $\q_r \in \D$ and every $\z \in \BB_\W(r)$.
\end{lemma}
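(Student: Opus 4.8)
The plan is to bound the $\W$-norm of $S(t)\z = (\x(t), \eti^t)$ by controlling its two components $\|\x(t)\|_{X^1}$ and $\|\eti^t\|_\K$ separately, where the norm on $\K$ further splits as $\|\eti^t\|_{\M^1}^2 + \HB(\eti^t)$. The $X^1$-part of $\x(t)$ together with the $\M^1$-part of $\eti^t$ is exactly what assumption (i) controls, since $\|S(t)\z\|_{\H^1}^2 = \|\x(t)\|_{X^1}^2 + \|\eti^t\|_{\M^1}^2 \leq (\q_r(t) + R_1)^2$ for $\z \in \BB_{\H^1}(r)$; note that $\BB_\W(r) \subset \BB_{\H^1}(Cr)$ for a suitable constant, since $\W$ embeds continuously into $\H^1$, so the hypothesis applies after adjusting the radius. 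The only genuinely new quantity to estimate is therefore $\HB(\eti^t) = \|(\eti^t)'\|_{\M^0}^2 + \sup_{y\geq 1} y\,\T(y;\eti^t)$.

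To control $\HB(\eti^t)$ I would invoke the technical Lemma~\ref{Lemma-CPS}, which is precisely designed for this purpose. The second component $\eti^t$ solves the transport Cauchy problem $\pt\eti^t = T\eti^t + \A\x(t)$ with $\eti^0 = \eti_0$, so I take the forcing term to be $\boldsymbol{w}(t) = \A\x(t)$. The hypothesis of Lemma~\ref{Lemma-CPS} requires a bound of the form $\|\boldsymbol{w}(t)\|_{Y^0}^2 \leq \q(t) + K$, and this is where assumptions (iii) and (i) combine: by (iii), $\A$ maps bounded subsets of $X^1$ into bounded subsets of $Y^0$, and by (i) the trajectory $\x(t) = \Pi_1 S(t)\z$ stays in $\BB_{X^1}(\q_r(t) + R_1)$, which is a bounded set of $X^1$ that shrinks toward $\BB_{X^1}(R_1)$. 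Hence $\|\A\x(t)\|_{Y^0}$ is bounded by a decaying part plus a uniform constant, giving the required $\q \in \D$ and a constant $K$ depending only on $R_1$ (hence independent of $r$ for large times). Applying the lemma yields $\HB(\eti^t) \leq C_0 \e^{-\frac\delta2 t} + MK$, with $C_0$ depending on $\HB(\eti_0)$, $\q$ and $K$, and $MK$ independent of $r$.

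Assembling the pieces, I obtain $\|S(t)\z\|_\W^2 = \|\x(t)\|_{X^1}^2 + \|\eti^t\|_{\M^1}^2 + \HB(\eti^t) \leq (\q_r(t)+R_1)^2 + C_0\e^{-\frac\delta2 t} + MK$. The terms that do not decay combine into a constant of the form $R_1^2 + MK$, which I would package as $R_2^2$ — crucially uniform in $r$ — while the decaying terms $\q_r(t)^2 + 2R_1\q_r(t) + C_0\e^{-\frac\delta2 t}$ assemble into a single function in $\D$ (a sum and product of decaying functions and bounded functions is again dominated by an element of $\D$), which plays the role of the new $\q_r$. Taking square roots and using subadditivity gives the claimed bound $\|S(t)\z\|_\W \leq \q_r(t) + R_2$.

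The main obstacle I anticipate is verifying that the forcing bound in Lemma~\ref{Lemma-CPS} can be taken with a constant $K$ that is truly independent of the initial radius $r$, so that the resulting $R_2$ is uniform in $r$ — this uniformity is what makes the lemma useful as an absorbing-set statement rather than a merely $r$-dependent bound. This hinges on the structure of (i), where $R_1$ is a fixed constant and only the decaying remainder $\q_r$ carries the $r$-dependence; one must check carefully that $C_0$, which does depend on $\HB(\eti_0)$ and hence on $r$, attaches only to the exponentially decaying term and not to the constant part. A secondary technical point is the careful bookkeeping of how the $\H^1$-hypothesis transfers to the $\W$-ball through the embedding constant, and confirming that $\eti_0 \in \K$ (so that $\HB(\eti_0) < \infty$) for $\z \in \BB_\W(r)$, which is exactly guaranteed by the definition of $\W = X^1 \times \K$.
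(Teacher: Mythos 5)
Your proposal is correct and follows essentially the same route as the paper: reduce to assumption (i) via the inclusion $\BB_\W(r)\subset\BB_{\H^1}(r)$, bound the forcing $\A\x(t)$ in $Y^0$ by combining (i) with (iii), and apply Lemma~\ref{Lemma-CPS} with ${\rm T}=\infty$ to control $\HB(\eti^t)$, with the $r$-dependence confined to the decaying term $C_0\e^{-\frac\delta2 t}$ and the constant part $MR_1$ uniform in $r$. The uniformity issue you flag is exactly the point the paper handles with the phrase ``up to possibly changing $\q_r$ and $R_1$,'' so no genuinely different ideas are involved.
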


\begin{proof}
Let $r\geq 0$, and let $\z = (\x_0, \eti_0)\in \BB_{\W}(r)$ be arbitrarily chosen.
The inclusion $\BB_{\W}(r) \subset \BB_{\H^1}(r)$ together with~(i) entail
$$
\| \x(t) \|_{X^1} \leq \q_r(t) + R_1
$$
for some $\q_r \in \D$. Owing to (iii), up to possibly changing $\q_r$ and $R_1$, the inequality
$$
\| \A\x(t) \|_{Y^0} \leq \q_r(t) + R_1
$$
is readily seen to hold.
Since $\eti^t$ solves the second equation of system \eqref{SYM}, exploiting Lemma~\ref{Lemma-CPS} with ${\rm T}=\infty$ we obtain
$$
\HB(\eti^t) \leq C_0 \e^{-\frac\delta2 t} + M R_1,
$$
where here $C_0$ depends on $r$. Recalling (i), this yields the desired conclusion.
\end{proof}

\begin{lemma}
\label{lem-expattr-w}
There is $r_2>0$ such that $\BB_\W(r_2)$ is exponentially attracting for $S(t)$.
\end{lemma}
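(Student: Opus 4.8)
The plan is to upgrade the $\H^1$-attraction of hypothesis (ii) to attraction by the compactly embedded $\W$-ball, by combining a transitivity-of-exponential-attraction argument with a smoothing decomposition read off at a single fixed time. First, by (i) the orbit of $\BB_{\H^1}(r_1)$ never leaves the ball $\BB_{\H^1}(\rho_0)$ with $\rho_0=\q_{r_1}(0)+R_1$, so throughout I would work on the bounded, forward-invariant set $\mathbb B=\overline{\bigcup_{t\geq0}S(t)\BB_{\H^1}(r_1)}\subset\BB_{\H^1}(\rho_0)$, on which both the H\"older estimate \eqref{contM} and the decomposition (v) are available. Since (ii) asserts that $\BB_{\H^1}(r_1)$ exponentially attracts every bounded subset of $\H^0$, it suffices—by the standard transitivity lemma for exponential attraction, whose hypotheses are exactly the uniform H\"older continuity \eqref{contM}—to prove that some ball $\BB_\W(r_2)$ exponentially attracts $\BB_{\H^1}(r_1)$ in the $\H^0$-metric.

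The crux is a fixed-time smoothing decomposition. Fix $\z_1,\z_2\in\mathbb B$ and apply (v): $S(t)\z_1-S(t)\z_2=L(t)+K(t)$ with $\|L(t)\|_{\H^0}\leq\q_{\rho_0}(t)\|\z_1-\z_2\|_{\H^0}$ and $\|K(t)\|_{\H^1}\leq\Q_{\rho_0}(t)\|\z_1-\z_2\|_{\H^0}$, where $\bar\eti^t=\Pi_2 K(t)$ solves $\pt\bar\eti^t=T\bar\eti^t+\boldsymbol w(t)$, $\bar\eti^0=0$, with $\|\boldsymbol w(t)\|_{Y^0}\leq\Q_{\rho_0}(t)\|\z_1-\z_2\|_{\H^0}$. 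The obstacle here is that $\Q_{\rho_0}\in\I$ is only nondecreasing, so the source bound does not decay and Lemma~\ref{Lemma-CPS} cannot be invoked on $[0,\infty)$ to control the tail of $\bar\eti^t$. The remedy is to freeze a horizon: on $I_{t^*}=[0,t^*]$ one has $\|\boldsymbol w(t)\|^2_{Y^0}\leq\Q_{\rho_0}(t^*)^2\|\z_1-\z_2\|^2_{\H^0}=:K$, a constant, so Lemma~\ref{Lemma-CPS} (with $\bar\eti^0=0$ and $\q\equiv0$, whence $C_0=0$) yields $\HB(\bar\eti^{t^*})\leq M\Q_{\rho_0}(t^*)^2\|\z_1-\z_2\|^2_{\H^0}$. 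Combined with the $\M^1$-bound from (v), this places the memory component of $K(t^*)$ in $\K$ and gives $\|K(t^*)\|_\W\leq C_*\|\z_1-\z_2\|_{\H^0}$; meanwhile, choosing $t^*$ large enough that $\q_{\rho_0}(t^*)\leq\tfrac12$, one has $\|L(t^*)\|_{\H^0}\leq\tfrac12\|\z_1-\z_2\|_{\H^0}$. Thus the time-$t^*$ map $\mathcal V=S(t^*)$ splits, on differences, into an $\H^0$-contraction plus a map into a bounded subset of the compactly embedded $\W$. This is exactly the property that circumvents the non-compactness of $\H^1\subset\H^0$: a single trajectory issued from $\mathbb B$ does \emph{not} smooth into $\W$, since the shifted initial tail $R(t)\eti_0$ persists, but the decomposition confines the non-contracting part to $\W$.

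Finally, I would iterate. Choosing a reference point $\zeta\in\BB_\W$ (for instance one with null past history), Lemma~\ref{lem-ii-w} keeps $\mathcal V\zeta=S(t^*)\zeta$ inside a fixed $\W$-ball, while the decomposition writes every $\mathcal V\z$, $\z\in\mathbb B$, as an element of a bounded subset of $\W$ plus an $\H^0$-vector of norm at most $\tfrac12\,\mathrm{diam}_{\H^0}\mathbb B$. Re-applying $\mathcal V$ and using Lemma~\ref{lem-ii-w} to contract the $\W$-radius toward $R_2$ at each stage, one obtains $\dist_{\H^0}(\mathcal V^n\mathbb B,\BB_\W(r_2))\leq C\,2^{-n}$ for a radius $r_2$ independent of $n$; this is the discrete exponential-attraction step, which is the standard construction of \cite{GMPZ,EMZ} specialized to our decomposition. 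Passing from the discrete times $nt^*$ to continuous time is routine: for intermediate $t$ one interpolates using (i) and the H\"older continuity \eqref{contM}, only enlarging $r_2$ and degrading the rate by a fixed factor. Together with the transitivity reduction of the first paragraph, this shows $\BB_\W(r_2)$ is exponentially attracting for $S(t)$. The main difficulty throughout is the one already isolated—the failure of compactness of $\H^1\subset\H^0$ and the persistence of the memory tail—resolved by extracting the smoothing decomposition at the single fixed time $t^*$ rather than demanding it uniformly in $t$.
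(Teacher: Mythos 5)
Your reduction, via the transitivity of exponential attraction, to showing that a $\W$-ball exponentially attracts $\BB_{\H^1}(r_1)$ is exactly the paper's first step. From there, however, you take a much heavier route than necessary, and the route as written has a gap. The paper never touches hypothesis (v) in this lemma: it decomposes the \emph{single} trajectory, not differences of trajectories. Writing $S(t)\z=(0,\boldsymbol{\psi}^t)+(\x(t),\boldsymbol{\zeta}^t)$, where $\boldsymbol{\psi}^t$ solves $\pt\boldsymbol{\psi}^t=T\boldsymbol{\psi}^t$ with $\boldsymbol{\psi}^0=\eti_0$ and $\boldsymbol{\zeta}^t$ solves the same equation with source $\A\x(t)$ and null datum, one has $\boldsymbol{\psi}^t=R(t)\eti_0$, and \eqref{NEC} gives $\|\boldsymbol{\psi}^t\|^2_{\M^1}\leq\Theta r_1^2\e^{-\delta t}$; meanwhile (i) and (iii) bound $\A\x(t)$ in $Y^0$ uniformly in $t$, so Lemma~\ref{Lemma-CPS} with ${\rm T}=\infty$ bounds $\HB(\boldsymbol{\zeta}^t)$, hence $(\x(t),\boldsymbol{\zeta}^t)$ stays in a fixed $\W$-ball. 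You explicitly observed that ``the shifted initial tail $R(t)\eti_0$ persists'' and concluded that a single trajectory does not smooth into $\W$ --- true, but you overlooked that this tail \emph{decays exponentially in the $\M^1$-norm} by \eqref{NEC}, which is all that exponential attraction requires. That one observation replaces your entire second and third paragraphs.

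The gap in your iteration is concrete: you track $\mathcal{V}^n\z=w_n+l_n$ with $w_n\in\BB_\W(\rho_n)$ and $\|l_n\|_{\H^0}\leq 2^{-n}D$, and you need $\rho_n$ bounded uniformly in $n$ to obtain ``a radius $r_2$ independent of $n$''. But $w_{n+1}=S(t^*)w_n+K(t^*;\mathcal{V}^n\z,w_n)$, and both the bound $\q_{\rho_n}(t^*)+R_2$ on $\|S(t^*)w_n\|_\W$ from Lemma~\ref{lem-ii-w} and the constant $\Q_{\rho_n}(t^*)$ from (v) depend on $\rho_n$ in an uncontrolled way, while $t^*$ was fixed once and for all from $\rho_0$. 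Nothing in the hypotheses prevents $\q_{\rho_n}(t^*)\gg\rho_n$, so the recursion $\rho_{n+1}=\q_{\rho_n}(t^*)+R_2+O(2^{-n})$ may blow up; the self-consistency condition needed to close it is circular (choosing $t^*$ to tame $\q_{\rho^*}$ requires knowing $\rho^*$, which requires knowing $t^*$) and is not guaranteed to have a solution. In the paper's later Efendiev--Miranville--Zelik construction this issue does not arise precisely because everything is confined to a single ball $\BB_\W(\varrho)$ already known to be positively invariant --- which is what the present lemma, together with Lemma~\ref{lem-ii-w}, is needed to produce in the first place. As stated, then, the argument does not close without either the single-trajectory decomposition or an additional uniformity assumption on $\q_r$ and $\Q_r$.
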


\begin{proof}
It is enough to show that the ball $\BB_{\W}(r_2)$ exponentially attracts $\BB_{\H^1}(r_1)$.
Indeed, exploiting (ii) and the continuity~\eqref{contM}, we can apply the
transitivity property of exponential attraction \cite{FGMZ}, and conclude that
the basin of (exponential) attraction of $\BB_{\W}(r_2)$
coincides with the whole phase space.
Let then $(\x(t), \eti^t)$
be the solution with initial datum $\z = (\x_0, \eti_0) \in \BB_{\H^1}(r_1)$.
Along the proof, $\Q\in\I$ denotes a {\it generic} function.
Making use of (i) and (iii), we get
$$
\sup_{t \geq 0} \| S(t)\z \|_{\H^1} \leq \Q(r_1)\quad\Rightarrow\quad
\sup_{t \geq 0} \| \A\x(t)\|_{Y^0} \leq \Q(r_1).
$$
We make the decomposition
$$
S(t)\z = (0, \boldsymbol{\psi}^t)+(\x(t), \boldsymbol{\zeta}^t),
$$
where $\boldsymbol{\psi}^t$ and $\boldsymbol{\zeta}^t$ solve
$$
\begin{cases}
\pt \boldsymbol{\psi}^t = T\boldsymbol{\psi}^t, \\
\boldsymbol{\psi}^0 = \eti_0,
\end{cases}
\quad
\begin{cases}
\pt \boldsymbol{\zeta}^t = T\boldsymbol{\zeta}^t + \A\x(t), \\
\boldsymbol{\zeta}^0 = 0.
\end{cases}
$$
Condition \eqref{NEC} entails that $\boldsymbol{\psi}^t$ satisfies the uniform exponential decay
$$
\| \boldsymbol{\psi}^t \|_{\M^1}^2 \leq \Theta r_1^2\,\e^{-\delta t}.
$$
In particular, $\boldsymbol{\psi}^t$ is uniformly bounded in $\M^1$, the bound depending on $r_1$.
Since so is $\eti^t$, we learn that
$$
\sup_{t \geq 0} \| \boldsymbol{\zeta}^t \|_{\M^1} \leq \Q(r_1),
$$
which, combined with an application of Lemma \ref{Lemma-CPS} with ${\rm T}=\infty$, gives
$$
\| \boldsymbol{\zeta}^t \|^2_{\K} = \| \boldsymbol{\zeta}^t \|^2_{\M^1}
+ \HB(\boldsymbol{\zeta}^t) \leq \Q(r_1).
$$
We conclude that
$$
\| (\x(t),\boldsymbol{\zeta}^t) \|_\W \leq \Q(r_1),
$$
while $(0, \boldsymbol{\psi}^t)$ decays exponentially (in fact, even in $\H^1$).
Hence, the claim follows by choosing $r_2 = r_2(r_1)$ sufficiently large.
\end{proof}

At this point, with reference to Lemmas \ref{lem-ii-w} and~\ref{lem-expattr-w},
choose $\varrho > \max\{r_2, R_2\}$ and consider the ball
$$
\BB = \BB_{\W}(\varrho).
$$
Since $\varrho > r_2$, it is clear that
$\BB$ remains exponentially attracting for $S(t)$; whereas,
since $\varrho > R_2$,
we have for $t^*>0$ large enough
$$
\q_\varrho(t^*) + R_2 \leq \varrho,
$$
implying in turn
$$
S(t)\BB \subset \BB, \quad \forall t \geq t^*.
$$
Fixing a positive $\lambda<1$ and exploiting point (v), up to possibly increasing $t^*$, the map
$$
S = S(t^*): \BB \to \BB
$$
fulfills for every $\z_1, \z_2 \in \BB$ the decomposition
$$
S\z_1 - S\z_2 = L(\z_1, \z_2) + K(\z_1, \z_2),
$$
where
$$
\begin{aligned}
&\| L(\z_1, \z_2) \|_{\H^0} \leq \lambda \| \z_1 - \z_2 \|_{\H^0},\\
&\| K(\z_1, \z_2) \|_{\H^1} \leq C\| \z_1 - \z_2 \|_{\H^0}.
\end{aligned}
$$
Here and in what follows, $C>0$ is a {\it generic} constant depending on $\varrho$ and $t^*$.
Besides, applying Lemma~\ref{Lemma-CPS} with ${\rm T}=t^*$,
we get
$$
\HB(\bar\eti^{t^*})\leq C\| \z_1 - \z_2\|^2_{\H^0}.
$$
Hence,
$$
\| K(\z_1, \z_2) \|_{\W} \leq C\| \z_1 - \z_2\|_{\H^0}.
$$
Invoking a nowadays classical result\footnote{Actually, we are using a well-known generalization of the
result in \cite{CL,EMZ}, originally stated with the decomposition $S\z = L\z + K\z$.
In turn, $L(\z_1, \z_2)=L\z_1-L\z_2$ and $K(\z_1, \z_2)=K\z_1-K\z_2$.
Indeed, a closer look at the proofs of \cite{CL,EMZ} shows that it suffices to split the difference
$S\z_1 - S\z_2$ as in (v).}
(see \cite{CL,EMZ}),
there exists an exponential attractor $\E_{\rm d} \subset \BB$ for the discrete
semigroup $S^n$ made by the $n^{\rm th}$-iterations of $S$.
Then, we define the map $\S : [t^*,2t^*] \times \BB\to \BB$ by
$$
\S(t, \z) = S(t)\z,
$$
and we consider the set
$$
\E = \S([t^*,2t^*]\times \E_{\rm d}) \subset \BB.
$$
It is apparent that $\E$ is positively invariant for $S(t)$.
Besides, since $\eti^t$ is a solution to the second equation of system \eqref{SYM},
using (iii) and arguing exactly as in~\cite[Proposition 7.2]{GMPZ}, we get
$$
\| \eti^{t_1} - \eti^{t_2} \|_{\M^0}
\leq C(t_1 - t_2), \quad \forall \z \in \BB,
$$
for every $t_1 > t_2 \geq t^*$.
This estimate together with point~(iv) and \eqref{contM}
imply that the map $\S$ is H\"older continuous when $\BB$ is endowed with the $\H^0$-topology.
Therefore, $\E$ is compact in $\H^0$ and, due to the well-known properties of the fractal measure,
it has finite fractal dimension in $\H^0$.
By the continuity property~\eqref{contM}, we first deduce that $\E$
exponentially attracts $\BB$ under the action of the continuous semigroup $S(t)$,
and appealing once again to the transitivity property of exponential attraction~\cite{FGMZ},
we conclude that $\E$ is exponentially attracting on the whole space $\H^0$.
\qed

\smallskip
As a final comment, we point out that one interesting feature of the theorem
is that the higher regularity memory space $\K$ does not appear in the statement,
being completely hidden in the proof.
In particular, in concrete cases one has to work
only with the more natural spaces $\M^\imath$. This renders the result
more immediate and easier to apply.

\subsection{Further remarks}
We conclude the section with some remarks on the assumptions of Theorem \ref{THM-ExpM}.

\smallskip
\noindent
{\bf I.}
Hypothesis \eqref{NEC} on $\mu$ cannot be weakened since,
as proved in \cite{CHEP}, it is necessary (and sufficient
as well) for the uniform stability of the semigroup~$R(t)$ on $\M^0$.
Indeed, in order to have existence
of absorbing sets for systems with memory, postulated by
assumptions (i) and (ii), it is necessary that $R(t)$ be uniformly stable.
We also mention that, when $\Theta=1$,
hypothesis~\eqref{NEC} boils down the well-known condition devised by Dafermos~\cite{DAF}
\begin{equation}
\label{DAFFY}
\mu'(s) + \delta \mu(s) \leq 0.
\end{equation}
On the other hand, when $\Theta>1$, the gap between \eqref{NEC} and the above relation becomes quite effective.
Just note that, contrary to \eqref{NEC}, the latter condition
does not allow $\mu$ to have flat zones, or even horizontal inflection points.

\smallskip
\noindent
{\bf II.}
The continuity property~\eqref{contM} is not used in its full strength:
it is enough to require that formula~\eqref{contM} holds for every
$\z_1 \in \BB_{\H^0}(r)$ and $\z_2 \in \BB_{\H^1}(r_1)$.
Although in most cases such a distinction is inessential,
there are some situations where~\eqref{contM} as it is written is hard to prove.

\smallskip
\noindent
{\bf III.}
In fact, we obtain the boundedness of the exponential attractor $\E$ not
only in $\H^1$, but also in the compactly embedded space $\W \Subset \H^1$.
In particular, $\Pi_2\E$ belongs to $\dom(T)$.

\smallskip
\noindent
{\bf IV.}
A final comment on assumption (v). In certain cases,
it might be difficult to prove the estimate of the compact part~$K$ in $\H^1$.
Actually, it is possible to relax the condition by introducing an
intermediate space endowed with a norm weaker than $\H^1$, but still stronger than $\H^0$.
To this end, we need to assume the existence of two reflexive Banach spaces $\hat Y^1 $ and $\hat Y^{-1}$ such that
$$
Y^1 \subset \hat Y^1 \Subset Y^0 \subset \hat Y^{-1},
$$
for which the relation
$$
\|y\|_{Y^0}\leq c_0\|y\|_{\hat Y^{-1}}^\varpi\|y\|_{\hat Y^{1}}^{1-\varpi}
$$
holds for every $y \in Y^0$ and some $c_0>0$ and $\varpi\in(0,1]$.
Then, defining
$$
\hat \M^1 = L^2_{\mu}(\R^+;\hat Y^1),
$$
the conclusions of Theorem~\ref{THM-ExpM} remain true if
$K(t, \z_1, \z_2)$ is estimated in
$X^1 \times \hat \M^1$
and $\boldsymbol{w}(t)$ in $\hat Y^{-1}$, respectively.

\section{Exponential Attractors: The Minimal State Framework}
\label{SECEXPS}

\noindent
The abstract Theorem~\ref{THM-ExpM} can be given an equivalent formulation for
the semigroup $\hat S(t)$ acting on the extended state space $\V^0$. However,
once the existence of an exponential attractor $\E$ for $S(t)$ is attained,
the corresponding result for $\hat S(t)$ can be immediately deduced
under the following rather mild assumption.

\begin{assumption}
\label{asso}
For any initial datum $\hat\z\in \BB_{\V^0}(r)$, denote by $\x(t)=\Pi_1 \hat S(t)\hat\z$ the
first component of the solution
$\hat S(t)\hat\z$, and define
$$
\boldsymbol{\psi}^t(s)
=\int_0^{\min\{t,s\}}\A\x(t-y)\d y.
$$
Then the function $\boldsymbol{Z}(t)=(\x(t),\boldsymbol{\psi}^t)$ belongs to $\H^0$ for every $t\geq 0$
and
$$\sup_{t\geq 0}\|\boldsymbol{Z}(t)\|_{\H^0}\leq \Q(r)$$
for some $\Q\in\I$.
\end{assumption}

\begin{remark}
Making use of \eqref{qqqqqqqq}, we deduce the equality
\begin{equation}
\label{wwwwwwwwwww}
(\Lambda \boldsymbol{\psi}^t)(\tau)=\int_0^t\mu(\tau+s)\A\x(t-s)\d s.
\end{equation}
\end{remark}

\begin{theorem}
\label{THM-ExpS}
In addition to the general assumptions,
let Condition~\ref{asso} hold.
If the semigroup $S(t):\H^0\to\H^0$ possesses an exponential attractor $\E$, then the set
$$\hat\E=\LL\E$$
is an exponential attractor for the semigroup $\hat S(t):\V^0\to\V^0$.
\end{theorem}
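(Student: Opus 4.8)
The plan is to verify the three defining properties of an exponential attractor for $\hat S(t)$ in turn, exploiting throughout that $\LL:\H^\imath\to\V^\imath$ is a bounded linear map of unit norm (Lemma~\ref{TTTH}) together with the intertwining identity $\hat S(t)\LL=\LL S(t)$ (Lemma~\ref{lemme}). Compactness, invariance and finite dimensionality are immediate. Since $\LL$ is continuous and $\E$ is compact in $\H^0$, the image $\hat\E=\LL\E$ is compact in $\V^0$; since $\LL$ is $1$-Lipschitz and a Lipschitz image cannot increase the fractal dimension, one gets $\Frac_{\V^0}(\hat\E)\leq\Frac_{\H^0}(\E)<\infty$; and positive invariance follows from Lemma~\ref{lemme} and the invariance of $\E$, because $\hat S(t)\hat\E=\hat S(t)\LL\E=\LL S(t)\E\subset\LL\E=\hat\E$.

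All the content therefore lies in the exponential attraction, and here the main obstacle is structural: as noted after Lemma~\ref{TTTH}, $\LL$ need not be onto, so a bounded subset of $\V^0$ cannot simply be written as $\LL(\text{bounded subset of }\H^0)$ and pulled back through Lemma~\ref{lemme}. The device that circumvents this is the following decomposition, which I would establish for every $\hat\z=(\x_0,\xei_0)\in\BB_{\V^0}(r)$. Writing $\x(t)=\Pi_1\hat S(t)\hat\z$ and introducing $\boldsymbol{\psi}^t$ as in Condition~\ref{asso}, the representation formula~\eqref{REP} and the identity~\eqref{wwwwwwwwwww} give $\xei^t=\xei_0(t+\cdot)+\Lambda\boldsymbol{\psi}^t$, whence
$$
\hat S(t)\hat\z=\LL\boldsymbol{Z}(t)+(0,\xei_0(t+\cdot)),\qquad \boldsymbol{Z}(t)=(\x(t),\boldsymbol{\psi}^t).
$$
By Condition~\ref{asso} the first summand lies in $\LL\BB_{\H^0}(\Q(r))$, a bounded subset of $\V^0$, while the remainder decays uniformly in $\hat\z$: after the change of variable $\sigma=t+\tau$, estimate~\eqref{NECnu} yields
$$
\|(0,\xei_0(t+\cdot))\|_{\V^0}^2=\int_t^\infty\nu(\sigma-t)\|\xei_0(\sigma)\|^2_{Y^0}\d\sigma\leq \Theta\e^{-\delta t}\|\xei_0\|^2_{\S^0}\leq \Theta r^2\e^{-\delta t}.
$$

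With this in hand I would conclude via the transitivity property of exponential attraction~\cite{FGMZ}, exactly as in the proof of Lemma~\ref{lem-expattr-w}, by chaining two attractions. First, $\hat\E$ exponentially attracts the bounded set $\LL\BB_{\H^0}(\Q(r))$: indeed, for $\z\in\BB_{\H^0}(\Q(r))$, Lemma~\ref{lemme} and the $1$-Lipschitz bound give $\dist_{\V^0}(\hat S(t)\LL\z,\hat\E)=\dist_{\V^0}(\LL S(t)\z,\LL\E)\leq \dist_{\H^0}(S(t)\z,\E)$, which decays exponentially because $\E$ attracts bounded subsets of $\H^0$. Second, the displayed decomposition shows that $\LL\BB_{\H^0}(\Q(r))$ exponentially attracts $\BB_{\V^0}(r)$, since the $\V^0$-distance of $\hat S(t)\hat\z$ from that set is bounded by $\sqrt\Theta\,r\,\e^{-\delta t/2}$. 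Transitivity then delivers the exponential attraction of $\BB_{\V^0}(r)$ by $\hat\E$, for every $r$, which is the assertion. The one delicate point in this last step is the usual interplay, internal to the transitivity lemma, between the exponentially growing H\"older constant furnished by~\eqref{contS} and the two decay rates; but this is precisely what~\cite{FGMZ} is designed to handle, so no new difficulty arises.
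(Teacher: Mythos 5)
Your proposal is correct and follows essentially the same route as the paper: the same decomposition $\hat S(t)\hat\z=\LL\boldsymbol{Z}(t)+(0,\xei_0(t+\cdot))$ obtained from \eqref{REP}, \eqref{wwwwwwwwwww} and Condition~\ref{asso}, the same tail estimate via \eqref{NECnu}, and the same comparison $\dist_{\V^0}(\LL S(t)\z,\LL\E)\leq\dist_{\H^0}(S(t)\z,\E)$; the only difference is that you outsource the final $t=a+b$ splitting to the transitivity lemma of \cite{FGMZ}, which the paper carries out by hand using \eqref{contS}. The paper additionally records, in a short follow-up lemma based on an absorbing-set argument, that the resulting rate $\hat\omega$ can be made independent of $r$, a point your argument leaves implicit since the constant $\ell$ coming from \eqref{contS} depends on $r$.
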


\begin{remark}
If $\E$ is also bounded in $\H^1$, we readily infer from Lemma~\ref{TTTH}
the boundedness of $\hat\E$ in $\V^1$.
\end{remark}

\begin{proof}
By applying Lemma~\ref{lemme}, and exploiting the positive invariance of $\E$, we get at once
$$\hat S(t)\hat\E=\hat S(t)\LL \E=\LL S(t)\E \subset \LL \E=\hat\E.$$
Moreover, since $\LL$ is Lipschitz continuous, we infer that $\hat\E$ is compact and
$$\Frac_{\V^0}(\hat\E)=\Frac_{\V^0}(\LL\E)\leq \Frac_{\H^0}(\E)<\infty.
$$
We are left to prove the existence of $\hat\omega>0$ and $\Q\in\I$ for which
\begin{equation}
\label{theBigI}
\dist_{\V^0}(\hat S(t)\BB_{\V^0}(r),\hat\E)\leq \Q(r)\e^{-\hat\omega t}.
\end{equation}
To this end, let $r\geq 0$ be fixed, and let $\hat \z=(\x_0,\xei_0)\in\BB_{\V^0}(r)$.
Along this proof, $C>0$ will denote a {\it generic} constant depending (increasingly) only on $r$.
Setting then
$$\hat S(t)\hat \z=(\x(t),\xei^t),$$
we construct the function
$\boldsymbol{Z}(t)=(\x(t),\boldsymbol{\psi}^t)$ of Condition~\ref{asso},
which is bounded in $\H^0$ uniformly with respect to $t$, the bound depending on $r$.
Finally, we introduce the function
$$\hat{\boldsymbol{Z}} (t)=\LL \boldsymbol{Z}(t)=(\x(t),\Lambda \boldsymbol{\psi}^t).$$
The first step is showing that, for every $a,b\geq 0$,
\begin{equation}
\label{xxxxx1}
\|\hat S(a+b)\hat\z-\hat S(a)\hat{\boldsymbol{Z}}(b)\|_{\V^0} \leq C\e^{\ell a-\frac{\delta\kappa}2b}
\end{equation}
for some $\ell=\ell(r)\geq 0$ and $\kappa=\kappa(r)\in(0,1]$.
Indeed, from the continuous dependence estimate~\eqref{contS},
$$\|\hat S(a+b)\hat\z-\hat S(a)\hat{\boldsymbol{Z}}(b)\|_{\V^0} \leq C\e^{\ell a}
\|\hat S(b)\hat\z-\hat{\boldsymbol{Z}}(b)\|_{\V^0}^\kappa
=C\e^{\ell a}
\|\xei^b-\Lambda \boldsymbol{\psi}^b\|_{\S^0}^\kappa.
$$
On the other hand, exploiting the representation formula \eqref{REP} and \eqref{wwwwwwwwwww}, we obtain
$$\xei^b(\tau)-(\Lambda \boldsymbol{\psi}^b)(\tau)=
\xei_0(b+\tau).$$
Hence, by virtue of~\eqref{NECnu},
$$\|\xei^b-\Lambda \boldsymbol{\psi}^b\|_{\S^0}^2
=\int_b^\infty \nu(\tau-b)\|\xei_0(\tau)\|^2_{Y^0}\d \tau\leq \Theta\|\xei_0\|^2_{\S^0}\e^{-\delta b}
\leq C\e^{-\delta b}.
$$
This proves~\eqref{xxxxx1}.
Next, we show that, for every $a,b\geq 0$,
\begin{equation}
\label{xxxxx2}
\dist_{\V^0}(\hat S(a)\hat{\boldsymbol{Z}}(b),\hat\E)\leq C\e^{-\omega a}
\end{equation}
for some $\omega>0$. Indeed,
$$\dist_{\V^0}(\hat S(a)\hat{\boldsymbol{Z}}(b),\hat\E)
=\dist_{\V^0}(\LL S(a){\boldsymbol{Z}}(b),\LL\E)\leq \dist_{\H^0}(S(a){\boldsymbol{Z}}(b),\E),
$$
and since ${\boldsymbol{Z}}(b)$ is uniformly bounded in $\H^0$,
the desired conclusion follows from the fact that $\E$ is an exponential attractor for $S(t)$.
At this point, writing $t=a+b$ and making use of~\eqref{xxxxx1}-\eqref{xxxxx2}, we are led to
\begin{align*}
\dist_{\V^0}(\hat S(t)\hat\z,\hat\E)
&\leq \|\hat S(a+b)\hat\z-\hat S(a)\hat{\boldsymbol{Z}}(b)\|_{\V^0}
+\dist_{\V^0}(\hat S(a)\hat{\boldsymbol{Z}}(b),\hat\E)\\
&\leq C\big[\e^{\ell a-\frac{\delta\kappa}2b}+\e^{-\omega a}\big].
\end{align*}
Choosing
$$a=\varkappa t\qquad\text{with}\qquad \varkappa=\frac{\delta \kappa}{\delta\kappa+2\omega+2\ell},$$
the desired conclusion~\eqref{theBigI} is reached by setting $\hat\omega=\omega \varkappa$.
\end{proof}

Actually, in the proof above, the exponential rate $\hat\omega$ depends on $r$.
However, such a dependence can be removed by means of a simple argument.

\begin{lemma}
Assume that \eqref{theBigI} holds for $\hat\omega=\hat\omega(r)$. Then~\eqref{theBigI} holds for
some $\hat\omega$ independent of $r$ as well.
\end{lemma}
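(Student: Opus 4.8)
The plan is to exploit the boundedness of $\hat\E$ together with the semigroup property in order to ``restart'' the dynamics from a fixed reference ball whose associated decay rate no longer depends on $r$. First I would fix a radius $\rho_0>0$ with $\hat\E\subset\BB_{\V^0}(\rho_0)$; this is legitimate because $\hat\E$, being an exponential attractor, is compact and hence bounded. I then single out the \emph{fixed} rate $\omega_0=\hat\omega(2\rho_0)$, namely the decay rate that the hypothesis \eqref{theBigI} supplies for the specific radius $2\rho_0$.

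Next, for an arbitrary $r\geq 0$, the $r$-dependent attraction estimate \eqref{theBigI} produces a finite time
$$
\bar t=\bar t(r)=\frac{1}{\hat\omega(r)}\,\ln^+\!\frac{\Q(r)}{\rho_0}\qquad(\text{with }\ln^+\! x=\max\{\ln x,0\}),
$$
after which $\dist_{\V^0}(\hat S(\bar t)\BB_{\V^0}(r),\hat\E)\leq\rho_0$. Since every point of $\hat\E$ has norm at most $\rho_0$ and the infimum in the Hausdorff semidistance is attained by compactness, this forces the inclusion $\hat S(\bar t)\BB_{\V^0}(r)\subset\BB_{\V^0}(2\rho_0)$: after the (finite, $r$-dependent) time $\bar t$ the evolved set is trapped inside the fixed ball $\BB_{\V^0}(2\rho_0)$.

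The final step is a splitting of the time axis. For $t\geq\bar t$, writing $t=\bar t+s$ and using the semigroup property $\hat S(t)=\hat S(s)\hat S(\bar t)$ together with the above inclusion, I would estimate
$$
\dist_{\V^0}(\hat S(t)\BB_{\V^0}(r),\hat\E)\leq \dist_{\V^0}(\hat S(s)\BB_{\V^0}(2\rho_0),\hat\E)\leq \Q(2\rho_0)\e^{-\omega_0 s}=\Q(2\rho_0)\e^{\omega_0\bar t}\,\e^{-\omega_0 t},
$$
which already exhibits the $r$-independent rate $\omega_0$. For the complementary range $0\leq t\leq\bar t$ one simply uses $\e^{-\hat\omega(r)t}\leq 1$ in \eqref{theBigI}, so that $\dist_{\V^0}(\hat S(t)\BB_{\V^0}(r),\hat\E)\leq\Q(r)\leq\Q(r)\e^{\omega_0\bar t}\e^{-\omega_0 t}$. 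Setting $\tilde\Q(r)=\max\{\Q(r),\Q(2\rho_0)\}\,\e^{\omega_0\bar t(r)}$ then yields \eqref{theBigI} with the uniform rate $\hat\omega=\omega_0$.

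I do not anticipate a genuine obstacle: this is the classical ``uniformization of the attraction rate'' and rests only on the boundedness of $\hat\E$ and the semigroup property. The sole points requiring a little care are purely bookkeeping, namely checking that $\bar t(r)<\infty$ for each $r$ (which holds because $\hat\omega(r)>0$) and verifying that the resulting prefactor $\tilde\Q$ can be taken in the class $\I$; the latter is arranged by passing, if necessary, to a nondecreasing majorant of $\tilde\Q$.
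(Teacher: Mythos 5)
Your proposal is correct and follows essentially the same route as the paper: both arguments produce a fixed ball that eventually traps the evolved sets (the paper calls it an absorbing set $\Bcal_0$, you build $\BB_{\V^0}(2\rho_0)$ explicitly from the attraction estimate and the boundedness of $\hat\E$), restart the dynamics from that ball with its fixed rate, and control the initial transient by a crude $r$-dependent bound. The only cosmetic difference is that the paper bounds the transient via the continuous dependence estimate \eqref{contS} while you reuse \eqref{theBigI} with $\e^{-\hat\omega(r)t}\leq 1$; both work.
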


\begin{proof}
Since the set $\hat\E$ is bounded and attracting, it is clear that for some $r_0>0$ large
enough the set $\Bcal_0=\BB_{\V^0}(r_0)$ is absorbing for $\hat S(t)$.
In particular, for any fixed $r\geq 0$, there exists $t_r\geq 0$ such that
$$\hat S(t)\BB_{\V^0}(r)\subset \Bcal_0,\quad\forall t\geq t_r.$$
Hence, setting $\hat\omega=\hat\omega(r_0)$, we get
$$\dist_{\V^0}(\hat S(t)\BB_{\V^0}(r),\hat\E)\leq
\dist_{\V^0}(\hat S(t-t_r)\Bcal_0,\hat\E)\leq C\e^{-\hat\omega (t-t_r)},\quad \forall t\geq t_r,
$$
for some $C=C(r_0)>0$. On the other hand, it is readily inferred from~\eqref{contS} that
$$\dist_{\V^0}(\hat S(t)\BB_{\V^0}(r),\hat\E)\leq \Q(r),\quad\forall t<t_r,$$
for some $\Q\in\I$. Collecting the two inequalities above the claim follows.
\end{proof}

\section{Application to the Equation of Viscoelasticity}
\label{SecVISCO}

\noindent
We conclude our discussion
with an application of the abstract Theorems~\ref{THM-ExpM} and \ref{THM-ExpS} to
the damped wave equation with memory arising in the theory of isothermal viscoelasticity~\cite{RHN}.

\subsection{The model}
Let $\Omega \subset \R^3$ be a bounded domain with smooth boundary
$\partial\Omega$. For $t>0$, we consider the equation
\begin{equation}
\label{CONCEQ}
\ptt u - \Delta u - \int_{0}^\infty k(s) \Delta\pt u(t-s)\d s+ f(u) = g
\end{equation}
in the unknown $u = u(x, t): \Omega \times \R \to \R$, subject to the Dirichlet boundary condition
$$u(x, t)_{|x \in \partial \Omega} = 0.$$
The variable $u$ is assumed to be known for negative times $t\leq 0$, where it need not solve the equation.
Defining the kernel
$$h(s)=k(s)+1,$$
and performing an integration by parts, \eqref{CONCEQ}
takes the more familiar form
$$\ptt u - h(0)\Delta u - \int_{0}^\infty h'(s) \Delta u(t-s)\d s+ f(u) = g.$$
Besides the general assumptions of Sec.\ \ref{Sec1}, we further suppose that
the memory kernel $\mu$ has no jumps\footnote{The request that $\mu$ have no
jumps is actually made only to simplify the notation.
Indeed, in the presence of jumps, no significant changes are needed in the forthcoming proofs.}
(i.e.\ it is absolutely continuous on $\R^+$) and
\begin{equation}
\label{BELLO}
\mu'(s)<0,\quad\text{for a.e. }s>0.
\end{equation}
Concerning the other terms,
$g\in L^2(\Omega)$ is a time-independent external force, whereas
the nonlinearity $f \in \C^2(\R)$, with $f(0)=0$, satisfies the standard growth
and dissipation conditions
\begin{align}
\label{ass-f}
|f''(u)|&\leq c(1+|u|),\quad \\
\label{f2}
\liminf_{|u|\to \infty}  \frac{f(u)}u & > - \lambda_1,
\end{align}
where $\lambda_1>0$ is the first eigenvalue of the Dirichlet operator $-\Delta$.
Equation \eqref{CONCEQ} is a particular case of the more general family of damped wave equations with memory \cite{DWE1,DWE2,DWE3}
$$
\ptt u + \alpha\pt u - \beta\Delta\pt u - \Delta u - \int_{0}^\infty k(s) \Delta\pt u(t-s)\d s+ f(u) = g,
$$
where $\alpha,\beta \geq 0$. If either $\alpha>0$ or $\beta>0$, instantaneous damping terms are present.
Instead, the case $\alpha=\beta=0$ under consideration is much more challenging, since the whole
dissipation mechanism is contained in the convolution integral only, and it is substantially weaker.
It is also worth mentioning that \eqref{CONCEQ} can be viewed as a memory relaxation of the Kelvin-Voigt model
$$
\ptt u - \Delta u - \Delta \pt u + f(u) = g,
$$
the latter being recovered in the limiting situation when $k$ collapses into the Dirac mass at $0^+$.

\begin{remark}
\label{TDMVAC}
It is readily seen that \eqref{CONCEQ} can be written in the form \eqref{BASEL} by setting
$$
\x =
\begin{pmatrix}
u \\
\pt u
\end{pmatrix},
\quad
\A\x =
\begin{pmatrix}
0 \\
-\Delta \pt u
\end{pmatrix},
\quad
\B\x =
\begin{pmatrix}
-\pt u \\
-\Delta u + f(u) -g
\end{pmatrix}.
$$
\end{remark}

\subsection{Notation}
Let $A=-\Delta$ be the linear
strictly positive operator on $L^2(\Omega)$ with domain
$$
\dom(A)=H^2(\Omega)\cap H_0^1(\Omega).
$$
For $\sigma \in \R$, we define the compactly nested Hilbert spaces
$$
\HH^\sigma = \dom(A^{\sigma/2}),
$$
endowed with the inner products and norms
$$
\l \cdot,\cdot\r_{\sigma}
=\l A^{\sigma/2}\cdot,A^{\sigma/2}\cdot\r_{L^2(\Omega)},
\qquad\| \cdot\|_{\sigma}
=\|A^{\sigma/2}\cdot\|_{L^2(\Omega)}.
$$
The index $\sigma$ will be always omitted whenever zero. In particular,
$$
\HH^{-1}=H^{-1}(\Omega),\quad \HH=L^2(\Omega),\quad
\HH^1=H_0^1(\Omega), \quad
\HH^2=H^2(\Omega)\cap H_0^1(\Omega).
$$
Till the end of the paper, $\Q\in\I$ will denote a {\it generic} function.

\subsection{Translating the equation}
According to Sec.\ \ref{SECSOL} and Remark~\ref{TDMVAC}, upon defining properly the functional spaces, equation~\eqref{CONCEQ}
translates
into an ODE in the history space, as well as in the state space frameworks.
To this end, with reference
to Sec.\ \ref{SECGEN}, for $\imath = 0,1$, we set
\begin{align*}
X^\imath = \HH^{\imath+1} \times \HH^\imath, \qquad
Y^\imath = \{ 0 \} \times \HH^{\imath-1},
\end{align*}
and we define the spaces $\M^\imath,\S^\imath$ and $\H^\imath,\V^\imath$ accordingly.
Since the first component of $Y^\imath$ is degenerate,
abusing the notation we agree to identify $Y^\imath$ with its second
component $\HH^{\imath-1}$, as well as the spaces $\M^\imath,\S^\imath$
with their second components, to wit,
$$\M^\imath=L^2_\mu(\R^+;\HH^{\imath-1}),
\qquad
\S^\imath=L^2_\nu(\R^+;\HH^{\imath-1}).
$$
Similarly, setting
$$
\eti =
\begin{pmatrix}
0 \\
\eta
\end{pmatrix}
\qquad\text{and}\qquad
\xei =
\begin{pmatrix}
0 \\
\xi
\end{pmatrix},
$$
we will write $T\eta$ and $P\xi$ in place of $T\eti$ and $P\xei$, respectively.
Then, the concrete realizations of the abstract equations \eqref{SYM} and \eqref{SYS} read
\begin{equation}
\label{SYS-MEM}
\begin{cases}
\displaystyle
\ptt u + Au + \int_{0}^\infty \mu(s) \eta(s)\d s + f(u) = g, \\
\noalign{\vskip1mm}
\pt \eta = T\eta + A\pt u,
\end{cases}
\end{equation}
and
\begin{equation}
\label{SYS-STATE}
\begin{cases}
\displaystyle
\ptt u + Au + \int_{0}^\infty \xi(\tau)\d \tau + f(u) = g, \\
\noalign{\vskip1mm}
\pt \xi = P\xi + \mu A\pt u.
\end{cases}
\end{equation}
Both~\eqref{SYS-MEM} and~\eqref{SYS-STATE} generate strongly (in fact, jointly) continuous semigroups
$$S(t):\H^0\to\H^0
\qquad\text{and}\qquad
\hat S(t):\V^0\to\V^0.
$$
Moreover, the continuous dependence estimates~\eqref{contM}
and~\eqref{contS} hold for $\kappa = 1$ and, due to~\eqref{ass-f}-\eqref{f2}, for every $r\geq 0$  we have
\begin{equation}
\label{boundMS}
S(t)\BB_{\H^0}(r)\subset \BB_{\H^0}(\Q(r))
\qquad\text{and}\qquad
\hat S(t)\BB_{\V^0}(r)\subset \BB_{\V^0}(\Q(r)),
\end{equation}
uniformly as $t\geq 0$ (see \cite{comarpa,DWE1,Rocky}).
In particular, given initial data  $\z = (u_0, v_0, \eta_0)\in \H^0$ and
$\hat \z = (u_0, v_0, \xi_0) \in \V^0$, we have the corresponding solutions
$$S(t)\z = (u(t), \pt u(t), \eta^t)
\qquad\text{and}\qquad
\hat S(t)\hat \z = (u(t), \pt u(t), \xi^t ),$$
and the representation formulae~\eqref{REPETA} and~\eqref{REP} become
$$
\eta^t(s)=
\begin{cases}
Au(t)-Au(t-s) & \quad  s\leq t,\\
\eta_0(s-t)+Au(t)-A u_0 & \quad  s>t,
\end{cases}
$$
and
$$
\xi^t(\tau)=\xi_0(t+\tau)+\int_0^t\mu(\tau+s)A\pt u(t-s)\d s.
$$

\subsection{Exponential attractors}
We are now in the position to state the main result on the existence of
regular exponential attractors for the equation of viscoelasticity.
In fact, although~\eqref{CONCEQ} is perhaps the most important (and certainly the most studied) example
of equation with memory, the existence of an exponential attractor has never been proved before, not even for the
much simpler
model where $\mu$ satisfies the less general assumption~\eqref{DAFFY} and/or
an additional term of the form $\alpha\pt u$, accounting for dynamical friction, is present.
So far in the literature, only the issues of global attractors and convergence of single
trajectories have been addressed. In particular, the existence of the global attractor
and its regularity within the hypotheses of the present work has been established in~\cite{comarpa,Rocky}.
This somehow justifies the need
of an easy-to-handle theoretical tool allowing to treat this kind of equations.

\begin{theorem}
\label{MAIN}
The semigroup $S(t)$ on $\H^0$ generated by~\eqref{SYS-MEM} possesses an exponential attractor~$\E$ bounded in $\H^1$.
\end{theorem}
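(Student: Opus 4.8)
The plan is to obtain Theorem~\ref{MAIN} as a direct application of the abstract Theorem~\ref{THM-ExpM}, so the whole task reduces to checking its five hypotheses (i)--(v) for the concrete realization \eqref{SYS-MEM}; the general assumptions (generation of the semigroup, the continuity estimate~\eqref{contM} with $\kappa=1$, and the uniform bound~\eqref{boundMS}) are already in force. Hypothesis (iii) is immediate: here $\A\x=A\pt u$ and $A\colon\HH^{1}\to\HH^{-1}$ is an isomorphism, so $\A$ sends bounded sets of $X^{1}=\HH^{2}\times\HH^{1}$ into bounded sets of $Y^{0}=\HH^{-1}$. Hypotheses (i) and (ii)---the dissipativity of $S(t)$ in $\H^{1}$ and the exponential attraction of a ball of $\H^{1}$---are exactly the higher-order regularity estimates underlying the existence and regularity of the global attractor; these have been established for the present setting (weak, memory-only dissipation, kernel satisfying \eqref{NEC} and \eqref{BELLO}) in \cite{comarpa,Rocky}, and I would quote them, recasting the decaying/bounded splitting employed there so as to produce the $\q_r\in\D$ demanded in (i).

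Next I would dispatch (iv). Writing $\x=(u,\pt u)$ one has $\pt\x=(\pt u,\ptt u)$ and $X^{0}=\HH^{1}\times\HH$. The component $\pt u\in\HH^{1}$ is uniformly bounded by the $\H^{1}$-estimate of (i), while $\ptt u$ is read off from the first equation of \eqref{SYS-MEM},
$$
\ptt u=-Au-\int_{0}^{\infty}\mu(s)\eta(s)\,\d s-f(u)+g .
$$
Using the $\HH^{2}$-bound on $u$, the control $\big\|\int_{0}^{\infty}\mu(s)\eta(s)\,\d s\big\|_{\HH}\le C\|\eta\|_{\M^{1}}$ via Cauchy--Schwarz, and the embedding $\HH^{1}\hookrightarrow L^{6}(\Omega)$ in dimension three together with the cubic growth of $f$ forced by \eqref{ass-f}, one bounds $\ptt u$ uniformly in $\HH$. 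This yields (iv) with the optimal exponent $p=\infty$.

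The heart of the matter is (v). Given $\z_{1},\z_{2}\in\BB_{\H^{1}}(r)$, I would set $(\bar u,\bar v,\bar\eta)=S(t)\z_{1}-S(t)\z_{2}$ and split it as $L+K$, where $L=(u^{L},\pt u^{L},\eta^{L})$ solves the \emph{homogeneous} linearized memory system with datum $\z_{1}-\z_{2}$, and $K=(u^{K},\pt u^{K},\eta^{K})$ solves the \emph{same} system forced by $-(f(u_{1})-f(u_{2}))$ with null initial datum. The part $L$ decays exponentially in $\H^{0}$ because the linear viscoelasticity semigroup is uniformly stable under \eqref{NEC}, which gives the required bound $\q_r(t)\|\z_1-\z_2\|_{\H^0}$. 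For $K$, writing $f(u_{1})-f(u_{2})=\big(\int_{0}^{1}f'(\theta u_{1}+(1-\theta)u_{2})\,\d\theta\big)\bar u$ and using that $u_{1},u_{2}$ are bounded in $\HH^{1}$, the quadratic growth of $f'$ together with $\HH^{1}\hookrightarrow L^{6}(\Omega)$ control the forcing in $\HH$ by $C_r\|\bar u\|_{\HH^{1}}\le C_r\|\z_1-\z_2\|_{\H^0}$: this is a \emph{more regular} source than the base level $Y^{0}=\HH^{-1}$, which is what permits $K$ to gain smoothness. The memory component $\eta^{K}=\Pi_{2}K$ then solves $\pt\eta^{K}=T\eta^{K}+\boldsymbol{w}$ with $\boldsymbol{w}=A\pt u^{K}$, and $\|\boldsymbol{w}\|_{Y^{0}}=\|\pt u^{K}\|_{\HH^{1}}$ is precisely the quantity supplied by the $X^{1}$-estimate of $K$, as required.

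The main obstacle is exactly this higher-order estimate of $K$. In dimension three the nonlinearity $f$ is Sobolev-critical (cubic), so testing the $K$-equation with the natural multiplier $A\pt u^{K}$ would force control of $f(u_{1})-f(u_{2})$ in $\HH^{1}$, where the product $f'(u_{i})\nabla u_{i}$ fails to be square-integrable; a direct estimate of $K$ in $\H^{1}$ is therefore unavailable. To bypass this I would invoke the relaxed variant described in the closing remarks of Section~\ref{SECEXPH}: one introduces a fractional intermediate space $\hat Y^{1}$ with $Y^{1}\subset\hat Y^{1}\Subset Y^{0}\subset\hat Y^{-1}$, estimates $K$ in $X^{1}\times\hat\M^{1}$ with $\hat\M^{1}=L^{2}_{\mu}(\R^{+};\hat Y^{1})$, and bounds $\boldsymbol{w}$ in $\hat Y^{-1}$. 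The weaker multiplier attached to $\hat Y^{1}$ lets the pairing with the critical source close, while the interpolation inequality of that remark restores compactness and attraction. Once (i)--(v) are secured, Theorem~\ref{THM-ExpM} delivers an exponential attractor $\E\subset\BB_{\H^{1}}(r_{1})$, bounded in $\H^{1}$, which is the assertion.
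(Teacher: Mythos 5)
Your overall skeleton coincides with the paper's: verify (i)--(v) of Theorem~\ref{THM-ExpM}, with the same $L+K$ splitting in (v) ($L$ solving the homogeneous linear memory system with datum $\z_1-\z_2$, $K$ carrying the forcing $f(u_1)-f(u_2)$ with null data), and your treatment of (iii) and (iv) matches the paper's. The genuine gap is in (v). You assert that a direct estimate of $K$ in $\H^1$ is unavailable because the cubic nonlinearity is Sobolev-critical in dimension three, and you therefore fall back on the relaxed variant of Remark IV (intermediate spaces $\hat Y^{1},\hat Y^{-1}$) without actually carrying out the corresponding estimates. The claimed obstruction does not exist here. Since $\z_1,\z_2\in\BB_{\H^1}(r)$, point (i) gives $u_1(t),u_2(t)$ uniformly bounded in $\HH^2\subset L^\infty(\Omega)$, so $f'(u_\imath)$ and $f''(u_\imath)$ are bounded pointwise and no criticality enters; moreover the $\HH^1$-pairings in the natural energy functional, such as $\l f(u_1)-f(u_2),w\r_1=\l f(u_1)-f(u_2),Aw\r$, are controlled by $\|f(u_1)-f(u_2)\|\,\|w\|_2$ and hence only require the $L^2$-norm of $f(u_1)-f(u_2)$ (which is $\leq C\|\bar u\|_1$ by \eqref{ass-f} and $\HH^1\subset L^6$), never its gradient. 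The paper exploits exactly this: it sets $E_K=\|K\|^2_{\H^1}$, perturbs it to $\Lambda=E_K+2\l f(u_1)-f(u_2),w\r_1$, estimates the terms $\l f'(u_1)\pt\bar u,w\r_1$ and $\l(f'(u_1)-f'(u_2))\pt u_2,w\r_1$ by $C(\|\pt\bar u\|+\|\bar u\|_1)\|w\|_2$, and closes with Gronwall to get $\|K(t,\z_1,\z_2)\|_{\H^1}\leq\Q(t)\|\z_1-\z_2\|_{\H^0}$ directly, whence also $\|\boldsymbol{w}(t)\|_{Y^0}=\|A\pt w\|_{-1}=\|\pt w\|_1$ is controlled. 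Your detour is thus both unnecessary and, as written, incomplete: you never produce the $X^1\times\hat\M^1$ bound on $K$ nor the $\hat Y^{-1}$ bound on $\boldsymbol{w}$ that the relaxed criterion actually demands, so (v) is not established.

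A secondary weakness: (i) and (ii) cannot simply be quoted from \cite{comarpa,Rocky} in the form required. The paper proves (i) as Proposition~\ref{abs-general} for all $\sigma\in[0,1]$ via the functionals $E_\sigma$, $\Phi$, $\Gamma$ and a modified Gronwall lemma resting on the dissipation integral \eqref{lemma-int-ut}, and obtains (ii) by a two-step bootstrap (exponential attraction of a ball of $\H^{1/3}$ via the splitting of \cite{Rocky}, then of a ball of $\H^1$, glued by the transitivity of exponential attraction \cite{FGMZ}). The ingredients come from \cite{Rocky}, but the statements needed by Theorem~\ref{THM-ExpM} --- a transient of the form $\q_r(t)+R_1$ in $\H^1$ and an exponentially attracting $\H^1$-ball --- require this additional assembly, which your proposal leaves implicit.
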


The proof of Theorem~\ref{MAIN} will be carried out in detail in the final Sec.\ \ref{SecLAST}.

\begin{remark}
It is worth mentioning that assumptions \eqref{NEC} and \eqref{BELLO} are still much weaker than the commonly used condition~\eqref{DAFFY}.
As a matter of fact, using the techniques devised in~\cite{PAT}, it is even possible to weaken~\eqref{BELLO}. Loosely speaking,
the conclusions of the theorem remain true if~\eqref{NEC} holds and $\mu$ is not ``too flat"; to make it precise, if
the {\it flatness rate} ${\mathbb F}$
of $\mu$ does not exceed $1/2$, where
$${\mathbb F}=\frac 1 {k(0)}\int_{\{s\,:\,\mu'(s)=0\}}
\mu(y)\d y.$$
In our case, we assumed ${\mathbb F}=0$.
\end{remark}

As a direct consequence of Theorem~\ref{MAIN}, we have

\begin{theorem}
\label{MAIN2}
The semigroup $\hat S(t)$ on $\V^0$ generated by~\eqref{SYS-STATE} possesses an exponential attractor~$\hat\E$ bounded in $\V^1$.
\end{theorem}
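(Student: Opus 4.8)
The plan is to deduce Theorem~\ref{MAIN2} directly from the abstract transfer result Theorem~\ref{THM-ExpS}. Since Theorem~\ref{MAIN} already furnishes an exponential attractor $\E$ for $S(t)$ on $\H^0$, bounded in $\H^1$, the only thing that remains to be checked in order to invoke Theorem~\ref{THM-ExpS} is that the concrete semigroup $\hat S(t)$ generated by~\eqref{SYS-STATE} satisfies Condition~\ref{asso}. Thus the entire argument reduces to verifying that single hypothesis.

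First I would make the function $\boldsymbol{\psi}^t$ of Condition~\ref{asso} explicit. Writing $\hat S(t)\hat\z=(u(t),\pt u(t),\xi^t)$ and recalling that $\A\x=(0,A\pt u)$ under the identification $Y^0=\HH^{-1}$, the defining integral, after the change of variable $\tau=t-y$, collapses to
\begin{equation*}
\psi^t(s)=A\int_{\max\{0,\,t-s\}}^{t}\pt u(\tau)\,\d\tau
=A\big(u(t)-u(\max\{0,t-s\})\big),
\end{equation*}
namely $\psi^t(s)=Au(t)-Au(t-s)$ for $s\leq t$ and $\psi^t(s)=Au(t)-Au_0$ for $s>t$. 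In other words, $\boldsymbol{Z}(t)=(u(t),\pt u(t),\psi^t)$ is precisely the past-history profile built from the forward $X$-trajectory with trivial initial memory, and in particular it involves no contribution from the initial state $\xi_0$.

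Next I would establish the uniform $\H^0$-bound. By the dissipativity estimate~\eqref{boundMS} one has $\sup_{t\geq0}\|u(t)\|_{1}\leq\Q(r)$ and $\sup_{t\geq0}\|\pt u(t)\|\leq\Q(r)$, so the $X^0$-component of $\boldsymbol{Z}(t)$ is controlled. For the memory component, since $\|Av\|_{-1}=\|v\|_{1}$ we obtain $\|\psi^t(s)\|_{-1}\leq\|u(t)\|_{1}+\|u(\max\{0,t-s\})\|_{1}\leq2\Q(r)$ uniformly in $s$ and $t$; integrating against $\mu$ gives
\begin{equation*}
\|\boldsymbol{\psi}^t\|^2_{\M^0}
=\int_0^\infty\mu(s)\|\psi^t(s)\|_{-1}^2\,\d s
\leq 4\Q(r)^2\int_0^\infty\mu(s)\,\d s
=4k(0)\,\Q(r)^2 .
\end{equation*}
Because $\mu\in L^1(\R^+)$, the total mass $k(0)=\int_0^\infty\mu(s)\,\d s$ is finite, whence $\sup_{t\geq0}\|\boldsymbol{Z}(t)\|_{\H^0}\leq\Q(r)$ and Condition~\ref{asso} is verified.

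With Condition~\ref{asso} at hand, Theorem~\ref{THM-ExpS} applies and yields at once that $\hat\E=\LL\E$ is an exponential attractor for $\hat S(t)$ on $\V^0$. Finally, Theorem~\ref{MAIN} provides $\E$ bounded in $\H^1$, and since by Lemma~\ref{TTTH} the operator $\LL$ is bounded (of unit norm) from $\H^1$ into $\V^1$, the image $\hat\E=\LL\E$ is bounded in $\V^1$, as claimed. The only step requiring genuine — though light — verification is the uniform memory estimate on $\boldsymbol{\psi}^t$, which rests entirely on the uniform $\HH^1$-boundedness of $u(t)$ together with the finiteness of $k(0)$; all the rest is a straightforward invocation of the abstract machinery already developed.
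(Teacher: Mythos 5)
Your proposal is correct and follows essentially the same route as the paper: both reduce the matter to verifying Condition~\ref{asso} via the explicit formula $\psi^t(s)=Au(t)-Au((t-s)_+)$, bound $\|\psi^t(s)\|_{-1}$ uniformly using \eqref{boundMS}, and then invoke Theorem~\ref{THM-ExpS} together with Lemma~\ref{TTTH} for the $\V^1$-boundedness of $\hat\E=\LL\E$. The only difference is that you write out the integration of $\|\psi^t(s)\|_{-1}^2$ against $\mu$ explicitly, a step the paper leaves implicit.
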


\begin{proof}
Owing to Theorem~\ref{MAIN}, we have to show that Condition~\ref{asso} is satisfied.
In which case, the desired conclusion follows from the abstract Theorem~\ref{THM-ExpS}.
To this aim, let
$$
\Pi_1 \hat S(t)\hat\z = (u(t), \pt u(t)),
$$
for an arbitrary initial datum $\hat\z = (u_0, v_0, \xi_0) \in \BB_{\V^0}(r)$.
Then, the second component (the first being trivially zero) $\psi^t$ of $\boldsymbol{\psi}^t$
fulfills the equality
$$
\psi^t(s) =
\begin{cases}
Au(t)-Au(t-s) & \quad  s\leq t,\\
Au(t)-A u_0 & \quad  s>t.
\end{cases}
$$
Exploiting \eqref{boundMS}, we readily obtain that
$$\|u(t)\|_1+\|\pt u(t)\|\leq \Q(r)
$$
and
$$
\| \psi^t(s)\|_{-1} = \| u(t) - u((t-s)_+)\|_{1}  \leq \Q(r),
$$
which clearly imply Condition~\ref{asso}.
\end{proof}

\section{Proof of Theorem \ref{MAIN}}
\label{SecLAST}

\noindent
All we need is checking the validity of conditions (i)-(v) of
Theorem~\ref{THM-ExpM}. To this end we set for $\sigma \in [0,1]$
$$
\M^\sigma = L^2_\mu(\R^+; \HH^{\sigma-1}),
$$
and we introduce the product spaces
$$
\H^\sigma = \HH^{\sigma+1} \times \HH^\sigma \times \M^\sigma.
$$
In the course of the investigation, we will borrow the following known results from~\cite{Rocky}.
\begin{itemize}
\item[$\bullet$] The semigroup $S(t)$ possesses a bounded absorbing set\footnote{Actually, the proof
of the existence of an absorbing set in \cite{Rocky} is indirect, being a consequence of the existence
of the global attractor, which is attained by means of gradient system techniques.} $\BB_0\subset\H^0$.
\smallskip
\item[$\bullet$] For every $\eps>0$ there exists $C_\eps>0$ such that
\begin{equation}
\label{lemma-int-ut}
\int_\tau^t \| \pt u(y) \| \d y \leq \eps(t - \tau) + C_{\eps}
\end{equation}
for every initial datum $\z \in \BB_0$. Here $\pt u$ denotes the second component of the solution.
\smallskip
\end{itemize}
Recall that $\BB_0 \subset \H^0$ is an absorbing set if
for every $r\geq 0$ there exists an entering time $t_r\geq 0$
such that
$$
S(t)\BB_{\H^0}(r) \subset \BB_0,
\quad \forall t \geq t_r.
$$
In what follows,
for any initial datum $\z = (u_0, v_0, \eta_0) \in \H^0$ and $t \geq 0$, we agree to call
$$
S(t)\z = (u(t), \pt u(t), \eta^t)
$$
the solution to system~\eqref{SYS-MEM} at time $t$.
The calculations that follow are understood to hold within a suitable regularization scheme;
in particular, the third component of the solution belongs to $\dom(T)$.
We will often use without explicit mention the Young,
the H\"older and the Poincar\'e inequalities, as well as the standard Sobolev embeddings, e.g.\
$\HH^1 \subset L^6(\Omega)$ and $\HH^2 \subset L^\infty(\Omega)$.

\subsection{Proof of point (i)}
We will actually prove a more general result.

\begin{proposition}
\label{abs-general}
For every $\sigma \in [0,1]$, there exists
$R_{\sigma}>0$ with the following property: given any $r\geq 0$
 there is $\q_{\sigma,r} \in \D$ such that
\begin{align*}
\|S(t)\z \|_{\H^{\sigma}}\leq \q_{\sigma,r}(t)+R_{\sigma}
\end{align*}
for all $\z \in
\BB_{\H^\sigma}(r)$.
\end{proposition}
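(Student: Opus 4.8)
The plan is to establish a genuine dissipative differential inequality for an energy at the regularity level $\sigma$. Write $S(t)\z=(u(t),\pt u(t),\eta^t)$ with $v=\pt u$, and set
$$E_\sigma(t)=\tfrac12\big(\|u(t)\|_{\sigma+1}^2+\|v(t)\|_{\sigma}^2+\|\eta^t\|_{\M^\sigma}^2\big),$$
augmented in the base case $\sigma=0$ by the potential contribution $\int_\Omega F(u)\,\d x-\l g,u\r$ with $F'=f$. First I would note that the continuous embedding $\H^\sigma\subset\H^0$ turns any $\z\in\BB_{\H^\sigma}(r)$ into a datum bounded in $\H^0$; hence, invoking the $\H^0$-absorbing set $\BB_0$ borrowed from~\cite{Rocky} together with~\eqref{boundMS}, the first component satisfies the uniform bound $\sup_{t\ge0}\|u(t)\|_1\le\Q(r)$, which is the workhorse for taming the nonlinearity at every level.

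Differentiating $E_\sigma$ along~\eqref{SYS-MEM}, testing the first equation against $A^\sigma v$ and the memory equation against $\eta$ in $\M^\sigma$, the coupling produced by the convolution term cancels exactly against the $A\pt u$ source in the $\eta$-equation, leaving
$$\ddt E_\sigma=\l T\eta,\eta\r_{\M^\sigma}-\l f(u),A^\sigma v\r+\l g,A^\sigma v\r.$$
Here $\l T\eta,\eta\r_{\M^\sigma}=\tfrac12\int_0^\infty\mu'(s)\|\eta(s)\|_{\sigma-1}^2\,\d s\le0$ by~\eqref{BELLO}, but this dissipation alone is not coercive on $\|\eta\|_{\M^\sigma}^2$ — the whole damping sits in the memory. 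To recover dissipativity I would add the standard multiplier functionals $\Phi_\sigma=\l v,u\r_\sigma$ and $\Psi_\sigma=-\l v,\int_0^\infty\mu(s)\eta(s)\,\d s\r_\sigma$, and form $\mathcal L_\sigma=NE_\sigma+\Phi_\sigma+\Psi_\sigma$ with $N$ large, so that $\mathcal L_\sigma$ is equivalent to $E_\sigma$. Exploiting~\eqref{NEC} (which makes $R(t)$ exponentially stable) and the integral estimate~\eqref{lemma-int-ut}, these perturbations supply the missing control of $\|u\|_{\sigma+1}^2$, $\|v\|_\sigma^2$ and $\|\eta\|_{\M^\sigma}^2$, yielding an inequality $\ddt\mathcal L_\sigma+c\,\mathcal L_\sigma\le C_\sigma$ with $C_\sigma$ independent of $r$.

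The main obstacle is the nonlinear term $\l f(u),A^\sigma v\r$ (and its analogues inside $\Phi_\sigma,\Psi_\sigma$). In the base case $\sigma=0$ it is absorbed by the potential energy and the dissipation condition~\eqref{f2}. For $\sigma\in(0,1]$ I would write $\l f(u),A^\sigma v\r=\l A^{\sigma/2}f(u),A^{\sigma/2}v\r$ and estimate $\|A^{\sigma/2}f(u)\|$ via the growth bound~\eqref{ass-f} (whence $|f'(u)|\le c(1+|u|^2)$), the Sobolev embeddings $\HH^1\subset L^6(\Omega)$, $\HH^2\subset L^\infty(\Omega)$ and interpolation, crucially feeding in the uniform $\HH^1$-bound on $u$ from the first step so that the contribution splits into a constant part plus a part absorbed by the coercive terms of $\mathcal L_\sigma$; the delicate point is that at $\sigma$ close to $1$ this pairing becomes borderline critical and must be handled by interpolating $\|A^{\sigma/2}f(u)\|$ between $\|f(u)\|$ and $\|\nabla f(u)\|$. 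Finally, applying the Gronwall lemma to $\ddt\mathcal L_\sigma+c\,\mathcal L_\sigma\le C_\sigma$ gives $\mathcal L_\sigma(t)\le \mathcal L_\sigma(0)\,\e^{-ct}+C_\sigma/c$; using the equivalence of $\mathcal L_\sigma$ and $E_\sigma$ and extracting square roots produces the claimed bound with $R_\sigma$ universal and $\q_{\sigma,r}(t)=\Q(r)\,\e^{-ct/2}\in\D$.
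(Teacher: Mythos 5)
Your overall architecture (a level-$\sigma$ energy, multiplier functionals, Gronwall) is the same as the paper's, but two steps as written would fail under the hypotheses actually assumed. The central gap is your claim that the standard multipliers $\l v,u\r_\sigma$ and $-\l v,\int_0^\infty\mu(s)\eta(s)\d s\r_\sigma$ ``supply the missing control of $\|\eta\|^2_{\M^\sigma}$'': this is where the real difficulty of the proposition lives, and it is not true as stated. The only dissipation produced by the memory equation is $\int_0^\infty\mu'(s)\|\eta(s)\|^2_{\sigma-1}\d s$, which controls $\|\eta\|^2_{\M^\sigma}$ only under the Dafermos condition \eqref{DAFFY}, i.e.\ $\mu'+\delta\mu\leq0$. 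The paper works under the strictly weaker \eqref{NEC} and \eqref{BELLO}, for which the set $P_\delta=\{s:\mu'(s)+\delta\mu(s)>0\}$ may be nonnegligible for every $\delta>0$; the exponential stability of $R(t)$ guaranteed by \eqref{NEC} does not convert into a pointwise-in-time differential inequality. This is exactly why the paper's functional $\Phi$ contains, besides your two multipliers (with the kernel truncated near zero to $\mu_\nu$ to tame the possible singularity), the additional term $\int_0^\infty\big(\int_s^\infty\mu(y)\chi_{P_\delta}(y)\d y\big)\|\eta(s)-Au\|^2_{\sigma-1}\d s$ borrowed from \cite{PAT,Rocky}; without it the contribution ${\mathcal P}_\delta[\eta]$ of the ``bad'' set is not recovered and the inequality does not close.

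Second, the nonlinear term does not leave you with $\frac{\d}{\d t}\mathcal L_\sigma+c\,\mathcal L_\sigma\leq C_\sigma$. With only the uniform $\HH^1$ bound on $u$ available, $2\l f'(u)\pt u,A^\sigma u\r$ is estimated by $C\|\pt u\|\,\|u\|^2_{\sigma+1}+C$, i.e.\ by $C\|\pt u\|E_\sigma+C$ with a constant that is in no way small, so it cannot be absorbed into $-cE_\sigma$. The correct closing step is the modified Gronwall lemma of \cite{DWE1} applied to $\frac{\d}{\d t}\Gamma+2\gamma\Gamma\leq C\|\pt u\|\Gamma+C$, which is precisely where the dissipation integral \eqref{lemma-int-ut} enters; you cite \eqref{lemma-int-ut} but assign it the wrong role. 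Finally, to obtain $R_\sigma$ independent of $r$, as the statement requires, one must first run the argument for $\z\in\BB_{\H^\sigma}(r)\cap\BB_0$, where all lower-order quantities are bounded by constants depending only on the absorbing set, and treat the transient interval $[0,t_r]$ separately by a crude Gronwall estimate; as written, your constants inherit the $\Q(r)$ from the $\HH^1$ bound and would only yield $R_\sigma=R_\sigma(r)$.
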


The proof of the case $\sigma=0$ follows by combining~\eqref{boundMS} with the existence of the
absorbing set $\BB_0$. Instead, the case $\sigma>0$ requires some work. In order to avoid the presence of unnecessary
constants, we assume without loss of generality
$$k(0)=\int_0^{\infty}\mu(s)\d s=1.$$
Let us take for the moment
$$\z \in \BB_{\H^\sigma}(r)\cap\mathbb{B}_0.$$
Then we know from~\eqref{boundMS} that
$$\|S(t)\z \|_{\H^0} \leq C,$$
where, throughout this subsection, $C>0$ denotes
a {\it generic} constant, possibly depending on $\BB_0$. In turn,
by \eqref{ass-f},
\begin{equation}
\label{bddfg}
\|f(u(t))-g\|\leq C.
\end{equation}
We define the energy functional
$E_\sigma=E_\sigma(t)$ by
$$
E_\sigma=\|u\|^2_{\sigma+1}+\|\pt u\|^2_{\sigma}+
\|\eta\|^2_{\M^\sigma}+2\langle f(u)-g,A^\sigma u\rangle.
$$
Since by~\eqref{bddfg} and the fact that $\sigma\leq 1$
$$2|\langle f(u)-g,A^\sigma u\rangle|\leq 2\|f(u)-g\|\|u\|_{2\sigma}
\leq C\|u\|_{\sigma+1},$$
we readily obtain the controls
\begin{equation}
\label{trollo1}
\frac{1}{2}\|S(t)\z\|_{\H^{\sigma}}^2-C\leq E_\sigma(t)\leq
2\|S(t)\z\|_{\H^{\sigma}}^2+C.
\end{equation}

\begin{lemma}
\label{lemmaEsigma}
The functional $E_\sigma$ fulfills the differential inequality
\begin{equation}
\label{STIMA1}
\ddt E_\sigma-\int_0^{\infty}\mu'(s)\|\eta(s)\|^2_{\sigma-1}\d s
\leq C\|\pt u\|E_\sigma +C.
\end{equation}
\end{lemma}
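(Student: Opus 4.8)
The plan is to read \eqref{STIMA1} as a differential \emph{identity} followed by a single nonlinear estimate, so the real content is concentrated in the last step. First I would differentiate $E_\sigma$ along the trajectory, handling the four summands separately. Testing the first equation of \eqref{SYS-MEM} with $2A^\sigma\pt u$ produces $\ddt(\|u\|_{\sigma+1}^2+\|\pt u\|_\sigma^2)$ together with the two coupling terms $2\langle f(u)-g,A^\sigma\pt u\rangle$ and $2\int_0^\infty\mu(s)\langle\eta(s),A^\sigma\pt u\rangle\,\d s$. The first of these is reabsorbed, up to sign, into $\ddt[2\langle f(u)-g,A^\sigma u\rangle]$ (the potential part of $E_\sigma$), since $\ddt[2\langle f(u)-g,A^\sigma u\rangle]=2\langle f'(u)\pt u,A^\sigma u\rangle+2\langle f(u)-g,A^\sigma\pt u\rangle$; what survives from this bookkeeping is only the residual $2\langle f'(u)\pt u,A^\sigma u\rangle$.

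For the memory component I would insert $\pt\eta=T\eta+A\pt u=-\eta'+A\pt u$, obtaining $\ddt\|\eta\|_{\M^\sigma}^2=2\int_0^\infty\mu(s)\langle-\eta'(s)+A\pt u,\eta(s)\rangle_{\sigma-1}\,\d s$. An integration by parts in $s$ converts the $-\eta'$ contribution into $\int_0^\infty\mu'(s)\|\eta(s)\|_{\sigma-1}^2\,\d s$, the boundary terms dropping out because $\eta(0)=0$ (the third component lies in $\dom(T)$ within the regularization scheme) and $\mu(s)\|\eta(s)\|_{\sigma-1}^2\to 0$ as $s\to\infty$, while the $A\pt u$ contribution reproduces exactly $2\int_0^\infty\mu(s)\langle\eta(s),A^\sigma\pt u\rangle\,\d s$. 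Summing the two computations, the memory--velocity couplings cancel and I am left with the exact identity $\ddt E_\sigma-\int_0^\infty\mu'(s)\|\eta(s)\|_{\sigma-1}^2\,\d s=2\langle f'(u)\pt u,A^\sigma u\rangle$.

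The whole difficulty is then to bound the residual $2\langle f'(u)\pt u,A^\sigma u\rangle$ by $C\|\pt u\|E_\sigma+C$, and this is the step I expect to be the main obstacle. Integrating \eqref{ass-f} twice gives $|f'(u)|\le c(1+|u|^2)$, so the integrand splits into a linear piece and a cubic piece $|u|^2|\pt u|\,|A^\sigma u|$. The crucial constraints are that the factor $\|\pt u\|$ must be peeled off in $L^2$ (to run a Gronwall argument against \eqref{lemma-int-ut}) and that the dependence on $E_\sigma$ must stay linear, whereas the naive Sobolev exponents for $A^\sigma u$ overshoot the regularity that $E_\sigma$ controls. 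The resolution is to redistribute derivatives by Gagliardo--Nirenberg interpolation between $\HH^{\sigma+1}$, controlled by $E_\sigma$, and $\HH^1$, controlled by the a priori bound $\|S(t)\z\|_{\H^0}\le C$ on the absorbing set $\BB_0$ coming from \eqref{boundMS}. In the borderline case $\sigma=1$, for instance, I would write $\int|u|^2|\pt u|\,|Au|\le\|\pt u\|\,\|u\|_{L^\infty}^2\|u\|_2$ and then use $\|u\|_{L^\infty}\le C\|u\|_2^{1/2}\|u\|_1^{1/2}$ together with $\|u\|_1\le C$ to collapse $\|u\|_{L^\infty}^2\|u\|_2$ into $C\|u\|_2^2\le CE_\sigma$, while the linear piece is handled by $\int|\pt u||Au|\le\|\pt u\|\,\|u\|_2\le C\|\pt u\|(E_\sigma+1)$.

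For the intermediate range $\sigma\in(0,1)$ the same philosophy applies after the fractional pairing $\langle f'(u)\pt u,A^\sigma u\rangle\le\|f'(u)\pt u\|_{\sigma-1}\,\|u\|_{\sigma+1}$, estimating the negative-order norm by duality against $\HH^{1-\sigma}$; the Sobolev exponents are admissible precisely because $\sigma\le 1$ keeps the cubic term subcritical in dimension three. Young's inequality and the uniform bound $\|\pt u\|\le C$ finally convert the remaining sublinear powers of $E_\sigma$ into $C\|\pt u\|E_\sigma+C$, yielding \eqref{STIMA1}. All of the above is carried out within the regularization scheme stated at the beginning of the section and justified by density.
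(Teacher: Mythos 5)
Your proposal is correct and follows essentially the same route as the paper: derive the exact identity $\ddt E_\sigma-\int_0^\infty\mu'(s)\|\eta(s)\|^2_{\sigma-1}\,\d s=2\langle f'(u)\pt u,A^\sigma u\rangle$ by testing the two equations with $\pt u$ in $\HH^\sigma$ and $\eta$ in $\M^\sigma$, then control the residual via the growth bound $|f'(u)|\le c(1+|u|^2)$, H\"older, Sobolev (Agmon at $\sigma=1$), and the a priori $\H^0$ bound on $\BB_0$, finally invoking \eqref{trollo1}. The only cosmetic difference is that you organize the fractional case through a duality pairing in $\HH^{\sigma-1}$ rather than the paper's direct three-exponent H\"older estimate, which yields the same bound.
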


\begin{proof}
Multiplying the first equation of \eqref{SYS-MEM} by $\pt u$ in $\HH^\sigma$ and the
second one by $\eta$ in $\M^{\sigma}$, we get
$$
\frac{\d}{\d t}E_\sigma=2\langle
T\eta,\eta\rangle_{\M^\sigma}+2\langle f'(u)\pt u,A^\sigma u
\rangle.
$$
The first term in the right-hand side satisfies the identity (see e.g.\ \cite{Rocky})
$$2\langle T\eta,\eta\rangle_{\M^\sigma}=\int_0^{\infty} \mu'(s)\|\eta(s)\|^2_{\sigma-1}\d s\leq 0,
$$
whereas by \eqref{ass-f} and the standard Sobolev (or Agmon if $\sigma=1$) inequalities we obtain
\begin{align*}
2\langle f'(u)\pt u,A^{\sigma}u \rangle
& \leq C\big(1+\|u\|^2_{L^{6/(1-\sigma)}}\big)\|\pt u\|\|A^{\sigma}u\|_{L^{6/(1+2\sigma)}}\\
& \leq C\big(1+\|u\|_1\|u\|_{\sigma+1}\big)\|\pt u\|\|u\|_{\sigma+1}\\
& \leq C\|\pt u\|\|u\|_{\sigma+1}^2+C.
\end{align*}
On account of \eqref{trollo1}, we are finished.
\end{proof}

To reach the desired conclusion, we have to improve \eqref{STIMA1}.
To this end, an additional functional is needed to reconstruct the energy.
First, in order to deal with the (possible) singularity of
$\mu(s)$ at zero, for any $\nu>0$ small we choose
$s_\nu >0$ such that
$$
\int_0^{s_\nu}\mu(s)\d s\leq \frac\nu 2,
$$
and we introduce the truncated kernel
$$\mu_\nu(s)=\mu(s_\nu)\chi_{(0,s_\nu]}(s)+\mu(s)\chi_{(s_\nu,\infty)}(s),
$$
where $\chi$ denotes the characteristic function. Besides, for any $\delta>0$ we set
\begin{align*}
{\mathcal P}_\delta[\eta] &=\int_{P_\delta}\mu(s)\|\eta(s)\|^2_{\sigma-1}\d s,\\
{\mathcal N}_\delta[\eta] &=\int_{N_\delta}\mu(s)\|\eta(s)\|^2_{\sigma-1}\d s,
\end{align*}
where
$$P_\delta=\big\{s\in\R^+:  \mu'(s)+\delta \mu(s)> 0\big\},
\qquad
N_\delta=\big\{s\in\R^+:  \mu'(s)+\delta \mu(s)\leq 0\big\}.$$
Note that
\begin{equation}
\label{ENNE}
{\mathcal N}_\delta[\eta]\leq -\frac1\delta \int_0^{\infty}\mu'(s)\|\eta(s)\|^2_{\sigma-1}\d s
\end{equation}
and
$${\mathcal P}_\delta[\eta]+{\mathcal N}_\delta[\eta]=\|\eta\|_{\M^{\sigma}}^2.$$
Finally, we define the functional $\Phi=\Phi(t)$ as
\begin{align*}
\Phi &=-\int_0^\infty \mu_\nu(s)\l \pt u,\eta(s)\r_{\sigma-1} \d s+(1-2\nu)\l \pt u,u\r_{\sigma}\\
&\quad +\int_{0}^\infty \bigg(\int_s^\infty \mu(y)\chi_{P_\delta}(y) \d y\bigg)\|\eta(s)-Au\|^2_{\sigma-1}\d s.
\end{align*}
Since, as shown in~\cite{Rocky}, condition \eqref{NEC} is equivalent to
$$k(s)\leq D \mu(s)$$
for some $D>0$, it is immediate to ascertain that
\begin{equation}
\label{FzCTRL}
|\Phi(t)|\leq C \|S(t)\z\|_{\H^{\sigma}}^2.
\end{equation}

Collecting Lemmas 5.3 and 5.4 in~\cite{Rocky}, and taking advantage of~\eqref{ENNE},
we obtain the following lemma,\footnote{Actually, Lemmas 5.3 and 5.4 in~\cite{Rocky} are proved for $\sigma=0$,
but the proofs are in fact the same for any $\sigma\in[0,1]$ (cf.\ \cite[Remark 5.5]{Rocky}).}
whose almost immediate proof is left to the reader.

\begin{lemma}\label{lemmaPhi}
For any $\nu,\delta>0$ small, there exists a positive constant $K=K(\nu,\delta)$
such that the following inequality holds:
\begin{align*}
& \ddt\Phi +\frac14\|u\|^2_{\sigma+1}+\nu\|\pt u\|^2_\sigma+\frac{1}{4}\|\eta\|_{\M^\sigma}^2\\
& \leq -K\int_0^{\infty}\mu'(s)\|\eta(s)\|^2_{\sigma-1}\d s+K\|f(u)-g\|^2.
\end{align*}
\end{lemma}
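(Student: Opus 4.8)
The plan is to differentiate the auxiliary functional $\Phi=\Phi_1+\Phi_2+\Phi_3$ along the trajectories of \eqref{SYS-MEM}, where I write $\Phi_1=-\int_0^\infty\mu_\nu(s)\langle\pt u,\eta(s)\rangle_{\sigma-1}\d s$, $\Phi_2=(1-2\nu)\langle\pt u,u\rangle_\sigma$ and $\Phi_3$ for the last tail-weighted integral. Each summand is tailored to manufacture one coercive term on the left-hand side: $\Phi_2$ generates $-\|u\|^2_{\sigma+1}$ up to lower order, $\Phi_1$ the negative kinetic term $-\|\pt u\|^2_\sigma$, and $\Phi_3$ the control of $\eta$ on the ``bad'' set $P_\delta$. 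Throughout I replace the second component's derivative by $\pt\eta=T\eta+A\pt u=-\eta'+A\pt u$, eliminate $\ptt u$ through the first equation $\ptt u=-Au-\int_0^\infty\mu(s)\eta(s)\d s-f(u)+g$, and integrate by parts in $s$, the boundary contributions at $s=0$ being killed by $\eta(0)=0$.

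I would first handle the two multiplier terms together. Differentiating $\Phi_2$ gives $-(1-2\nu)\|u\|^2_{\sigma+1}+(1-2\nu)\|\pt u\|^2_\sigma$, a cross term $-(1-2\nu)\int\mu\langle\eta,u\rangle_\sigma\d s$, and $-(1-2\nu)\langle f(u)-g,A^\sigma u\rangle$, the latter absorbed into $\eps\|u\|^2_{\sigma+1}+C\|f(u)-g\|^2$. In $\ddt\Phi_1$, the $A\pt u$ part of $\pt\eta$ yields the decisive $-\big(\int_0^\infty\mu_\nu\,\d s\big)\|\pt u\|^2_\sigma$; since $\int_0^{s_\nu}\mu\le\nu/2$ forces $\int_0^\infty\mu_\nu\ge1-\nu/2$, this beats the $+(1-2\nu)\|\pt u\|^2_\sigma$ from $\Phi_2$ and leaves a coefficient $\le-\nu$. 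The $\ptt u$ substitution in $\Phi_1$ produces $+\int\mu_\nu\langle Au,\eta\rangle_{\sigma-1}$; invoking the identity $\langle Au,\eta\rangle_{\sigma-1}=\langle u,\eta\rangle_\sigma$, this nearly cancels the cross term of $\Phi_2$, the surviving remainder $\int[\mu_\nu-(1-2\nu)\mu]\langle u,\eta\rangle_\sigma\d s$ being of order $\sqrt\nu$ because $\mu_\nu-\mu$ is supported in $(0,s_\nu)$. The $\eta'$ contribution, integrated by parts, replaces $\mu_\nu$ by $\mu_\nu'=\mu'\chi_{(s_\nu,\infty)}$ and is dominated by $\eps\|\pt u\|^2_\sigma-C\int\mu'\|\eta\|^2_{\sigma-1}\d s$, while $\int\mu_\nu\langle f(u)-g,\eta\rangle$ goes into $\eps\|\eta\|^2_{\M^\sigma}+C\|f(u)-g\|^2$.

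Next I would compute $\ddt\Phi_3$. Setting $b(s)=\int_s^\infty\mu(y)\chi_{P_\delta}(y)\d y$, so that $b'=-\mu\chi_{P_\delta}$ and $b(0)=\int_{P_\delta}\mu$, and using $\pt\eta-A\pt u=-\eta'$, I get $\ddt\Phi_3=-\int_0^\infty b(s)\,\partial_s\|\eta(s)-Au\|^2_{\sigma-1}\d s$. Integrating by parts and using $\eta(0)=0$ produces a boundary term $b(0)\|u\|^2_{\sigma+1}$ that is exactly cancelled by the $-\int_{P_\delta}\mu\|Au\|^2_{\sigma-1}\d s$ contained in $-\int_{P_\delta}\mu\|\eta-Au\|^2$; after a Young splitting of the residual $2\int_{P_\delta}\mu\langle\eta,Au\rangle_{\sigma-1}$ this leaves the clean bound $\ddt\Phi_3\le-\tfrac12\mathcal P_\delta[\eta]+2\,b(0)\|u\|^2_{\sigma+1}$. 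It then remains to dispose of the genuinely dangerous memory convolution $\big\|\int_0^\infty\mu(s)\eta(s)\d s\big\|^2_{\sigma-1}$ thrown up by the $\ptt u$ substitution in $\Phi_1$: splitting the integral over $P_\delta$ and $N_\delta$ and applying Cauchy--Schwarz in the measure $\mu\d s$, I would bound it by $2\mathcal N_\delta[\eta]+2\,b(0)\,\mathcal P_\delta[\eta]$, so its $\mathcal P_\delta$ part is a small multiple of the $-\tfrac12\mathcal P_\delta[\eta]$ just gained and its $\mathcal N_\delta$ part is handed to the right-hand side.

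The main obstacle, and the very reason for the $P_\delta/N_\delta$ decomposition, is that the only true dissipation is $-\int_0^\infty\mu'(s)\|\eta(s)\|^2_{\sigma-1}\d s$, which on $P_\delta$ is weak (there $-\mu'<\delta\mu$) and hence controls $\mathcal N_\delta[\eta]$ but not $\mathcal P_\delta[\eta]$. Reconstructing $\tfrac14\|\eta\|^2_{\M^\sigma}=\tfrac14\mathcal P_\delta[\eta]+\tfrac14\mathcal N_\delta[\eta]$ therefore relies on two separate mechanisms: the term $-\tfrac12\mathcal P_\delta[\eta]$ produced by $\Phi_3$, and inequality \eqref{ENNE}, which routes $\mathcal N_\delta[\eta]$ into $-K\int\mu'\|\eta\|^2_{\sigma-1}\d s$ at the cost of $K\gtrsim1/\delta$. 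Since every positive $\|u\|^2_{\sigma+1}$ or $\mathcal P_\delta[\eta]$ remainder carries a factor $b(0)=\int_{P_\delta}\mu$ or a factor $\nu$, and since \eqref{BELLO} (with $\mu'<0$ a.e.) forces $\int_{P_\delta}\mu\to0$ as $\delta\to0$, I would fix the constants in the order $\nu$ small, then $\delta$ small so that $b(0)$ is small, then the Young parameters $\eps$, and finally $K=K(\nu,\delta)$ large; this closes the $\|u\|^2_{\sigma+1}$ coefficient at $-\tfrac14$ and the $\mathcal P_\delta$ coefficient at $-\tfrac14$, giving the asserted differential inequality.
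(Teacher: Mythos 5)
Your proof is correct and follows essentially the same route as the paper, which simply delegates the term-by-term differentiation of $\Phi$ to Lemmas 5.3 and 5.4 of \cite{Rocky} and then invokes \eqref{ENNE}: your three-way splitting of $\Phi$, the cancellation of the cross terms via the truncated kernel $\mu_\nu$, the ${\mathcal P}_\delta/{\mathcal N}_\delta$ decomposition of the convolution term, and the use of \eqref{BELLO} to make $\int_{P_\delta}\mu(s)\d s$ small are exactly the ingredients of that cited argument. Nothing further is needed.
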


Accordingly, up to fixing $\nu,\delta>0$ small enough and recalling~\eqref{bddfg}, we conclude
that
\begin{equation}
\label{FIIIIII}
\ddt\Phi +\nu \|S(t)\z \|_{\H^{\sigma}}^2
\leq -C\int_0^{\infty}\mu'(s)\|\eta(s)\|^2_{\sigma-1}\d s+C.
\end{equation}

\begin{proof}[Conclusion of the proof of Proposition \ref{abs-general}]
At this point, for $\eps>0$ we introduce the further functional $\Gamma=\Gamma(t)$ as
$$\Gamma=E_\sigma+\eps\Phi.$$
By \eqref{trollo1} and \eqref{FzCTRL} it is apparent that
\begin{equation}
\label{S(t)z}
\frac{1}{4}\|S(t)\z\|_{\H^{\sigma}}^2-C\leq \Gamma(t)\leq
4\|S(t)\z\|_{\H^{\sigma}}^2+C
\end{equation}
for any $\eps>0$ small enough.
By collecting~\eqref{STIMA1} and~\eqref{FIIIIII} we obtain
$$
\ddt\Gamma +\nu\eps \|S(t)\z \|_{\H^{\sigma}}^2
\leq (1-C\eps)\int_0^{\infty}\mu'(s)\|\eta(s)\|^2_{\sigma-1}\d s +C\|\pt u\|E_\sigma+C.
$$
Choosing then $\eps$ sufficiently small, and recalling~\eqref{trollo1} and \eqref{S(t)z},
we end up with the differential inequality
$$\ddt\Gamma +2\gamma \Gamma
\leq C\|\pt u\|\Gamma+C
$$
for some $\gamma>0$.
Owing to the dissipation integral~\eqref{lemma-int-ut}, we can apply a modified version of the Gronwall lemma
(see e.g. Lemma 2.1 in \cite{DWE1}) to get
$$\Gamma(t)\leq C|\Gamma(0)|\e^{-\gamma t}+C.$$
A final use of \eqref{S(t)z} completes the proof when $\z \in \BB_{\H^\sigma}(r)\cap\mathbb{B}_0$.

As far as the general case is concerned, we observe that
there exists $t_r\geq 0$ such that $S(t_r)\BB_{\H^\sigma}(r)\subset\BB_0$.
It is then enough to show that
\begin{equation}
\label{second}
S(t)\BB_{\H^\sigma}(r)\subset \BB_{\H^\sigma}(\Q(r)),\quad\forall t\leq t_r.
\end{equation}
Indeed, once this is known,
we can write (for $t\geq t_r$)
$$S(t)\z=S(t-t_r)\boldsymbol{\tilde\z}
\qquad\text{with}\qquad
\boldsymbol{\tilde\z}=S(t_r)\z\in \BB_{\H^\sigma}(\Q(r))\cap\mathbb{B}_0,$$
and the claim follows by the previous step.
The proof of~\eqref{second} can be done by merely applying the Gronwall lemma to~\eqref{STIMA1}
on the time interval $[0,t_r]$,
the only difference being that now the constant $C$ will depend on $r$.
\end{proof}

\subsection{Proof of point (ii)}
Let $r\geq 0$ be given. According to~\cite{Rocky} (see Lemmas~4.3 and~4.4 therein),
for every $\z\in\BB_{\H^0}(r)$ the solution
$S(t)\z$ splits into the sum
$$
S(t)\z = L(t)\z + K(t)\z,
$$
where
$$
\|L(t)\z\|_{\H^0} \leq \Q(r)\e^{-\omega t}\qquad\text{and}\qquad
\|K(t)\z\|_{\H^{1/3}} \leq\Q(r).
$$
Here, $\omega>0$ is actually independent of $r$.
In particular, the (bounded) absorbing set $\BB_0$ is exponentially attracted by
a ball
$\BB_{1/3}$ of $\H^{1/3}$.
Besides, by Proposition~\ref{abs-general} with $\sigma=1/3$, we know that
$S(t)\BB_{1/3}$ remains uniformly bounded (with respect to $t$) in $\H^{1/3}$.
Hence, Lemma~4.5 in~\cite{Rocky} allows us to draw the uniform bound
$$\sup_{\z\in \BB_{1/3}}\|K(t)\z\|_{\H^1}\leq C$$
for some $C>0$ depending only on $\BB_{1/3}$.
This, together with the decay of $L(t)\z$,
tell that $\BB_{1/3}$ is exponentially attracted by
a ball
$\BB_{1}$ of $\H^{1}$.
By the transitivity property of exponential attraction \cite{FGMZ}, which applies since we have the
continuity~\eqref{contM},
we infer that $\BB_0$ is exponentially attracted by
$\BB_1$. At this point, since $\BB_0$ is absorbing and~\eqref{boundMS} holds,
it is a standard matter to conclude that $\BB_1$ exponentially attracts
every bounded subset of $\H^0$.
\qed

\subsection{Proof of point (iii)}
For $\x= (u, v)$, it is immediate to see that
$$\|\A \x\|_{Y^0}=\|A v\|_{-1}=\|v\|_1\leq \|\x\|_{X^1},$$
yielding the desired bound.
\qed

\subsection{Proof of point (iv)}
If $\z\in \BB_{\H^1}(r)$, by the previous point (i) we get
$$
\| S(t)\z \|_{\H^1} \leq \Q(r)\quad\Rightarrow\quad
\|\pt u(t)\|_1\leq \Q(r).
$$
Recalling~\eqref{ass-f}, we read from the first equation of~\eqref{SYS-MEM} the further bound
$$
\| \ptt u(t) \| \leq \Q(r).
$$
Therefore, (iv) holds with $p = \infty$.
\qed

\subsection{Proof of point (v)}
Let $r \geq 0$ be fixed. Along this proof, $C>0$ and $\Q\in\I$ will denote a {\it generic} constant
and function, respectively, both possibly depending on $r$.
For $\z_1,\z_2\in\BB_{\H^1}(r)$ arbitrarily chosen,
we deduce from point~(i) the bound
\begin{equation}
\label{bound-v}
\| S(t)\z_\imath \|_{\H^1}=\|(u_\imath(t), \pt u_\imath(t), \eta_\imath^t)\|_{\H^1} \leq \Q(r).
\end{equation}
We decompose the difference
$$
S(t)\z_1 - S(t)\z_2 =
(\bar u(t), \pt \bar u(t), \bar\eta^t)
$$
as follows:
$$
S(t)\z_1 - S(t)\z_2 = L(t, \z_1, \z_2) + K(t, \z_1, \z_2),
$$
where
$$L(t, \z_1, \z_2) = (v(t), \pt v(t), \xi^t)
\qquad\text{and}\qquad
K(t, \z_1, \z_2) = (w(t), \pt w(t), \psi^t)$$
fulfill the systems
$$
\begin{cases}
\displaystyle
\ptt  v + A  v + \int_0^\infty \mu(s)\xi(s)\d s = 0, \\
\pt \xi = T\xi + A\pt v,
\end{cases}
$$
and
$$
\begin{cases}
\displaystyle
\ptt  w + A  w + \int_0^\infty \mu(s)\psi(s)\d s + f(u_1) - f(u_2) = 0, \\
\pt \psi = T\psi + A\pt w,
\end{cases}
$$
with initial data
$$L(0, \z_1, \z_2)=\z_1 - \z_2\qquad\text{and}\qquad
K(0, \z_1, \z_2)=0.$$
As shown in \cite{PAT,Flat}, the first (linear) system
generates an exponentially stable contraction semigroup on $\H^0$. Thus,
\begin{equation}
\label{dec-lin-exp-dec}
\| L(t, \z_1, \z_2) \|_{\H^0} \leq B\e^{-\omega t}\| \z_1 - \z_2\|_{\H^0},
\end{equation}
for some $B\geq 1$ and $\omega>0$. Concerning the second system, we define the energy functionals
$$
E_K(t) = \| K(t, \z_1, \z_2) \|^2_{\H^1} =
\| w(t) \|^2_2 + \| \pt w(t)\|^2_1 + \| \psi^t\|^2_{\M^1}
$$
and
$$
\Lambda (t) =
E_K(t) +
2\l f(u_1(t)) - f(u_2(t)), w(t)\r_1.
$$
Owing to \eqref{ass-f} and \eqref{bound-v}, we draw
$$
2|\l f(u_1) - f(u_2), w\r_1|
\leq C\| \bar u\|_1 \|w\|_2
\leq  \frac12 \|w\|^2_2 + C\| \bar u\|^2_1
\leq \frac12 E_K + C\| \bar u\|^2_1.
$$
Since by \eqref{contM} (which holds for
$\kappa=1$)
$$\| \bar u(t)\|^2_1\leq \Q(t)\| \z_1 - \z_2 \|^2_{\H^0},$$
we conclude that
\begin{equation}
\label{Lambda-Energy}
\Lambda(t) \geq
\frac12 E_K(t) - \Q(t)\| \z_1 - \z_2\|^2_{\H^0}.
\end{equation}
Besides, by direct calculations,
\begin{align*}
\ddt \Lambda
&= 2\l f'(u_1)\pt \bar u, w\r_1 + 2\l (f'(u_1) - f'(u_2))\pt u_2, w\r_1 +
2\l T\psi,\psi\r_{\M^1} \\
&\leq 2\l f'(u_1)\pt \bar u, w\r_1 + 2\l (f'(u_1) - f'(u_2))\pt u_2, w\r_1.
\end{align*}
Appealing once again to \eqref{ass-f} and \eqref{bound-v},
$$
2\l f'(u_1)\pt \bar u, w\r_1
\leq C\|\pt \bar u\| \|w\|_2
$$
and
$$
2\l (f'(u_1) - f'(u_2))\pt u_2, w\r_1
\leq C\| \bar u \|_{L^6}\|\pt u_2\|_{L^3} \| w \|_2
\leq C\| \bar u \|_1\| w \|_2.
$$
Therefore, making use of \eqref{dec-lin-exp-dec} and \eqref{Lambda-Energy},
\begin{align*}
\ddt \Lambda(t)
&\leq C\| w(t) \|_2\big( \| \bar u(t) \|_1 + \|\pt \bar u(t)\|\big) \\
&\leq C\| w(t) \|_2\big(\| w(t) \|_1 + \|\pt w(t)\|
+\| v(t) \|_1 + \|\pt v(t)\|\big) \\
\noalign{\vskip 0.5mm}
&\leq C \big(E_K(t) + \sqrt{E_K(t)}\| \z_1 - \z_2 \|_{\H^0}\big) \\
\noalign{\vskip 1mm}
&\leq C\big(E_K(t) +\| \z_1 - \z_2 \|^2_{\H^0}\big)\\
\noalign{\vskip 1mm}
&\leq C\Lambda(t) + \Q(t)\| \z_1 - \z_2\|^2_{\H^0}.
\end{align*}
Observing that $\Lambda(0)=0$,
an application of the Gronwall Lemma yields
$$
\Lambda(t) \leq \Q(t)\| \z_1 - \z_2\|^2_{\H^0},
$$
and a further use of~\eqref{Lambda-Energy} leads to
$$
\| K(t, \z_1, \z_2) \|^2_{\H^1}=E_K(t) \leq \Q(t)\| \z_1 - \z_2\|^2_{\H^0}.
$$
Accordingly,
$$
\| A\pt w(t) \|_{-1} = \| \pt w(t) \|_1
\leq \sqrt{E_K(t)}
\leq \Q(t)\| \z_1 - \z_2\|_{\H^0}.
$$
Collecting \eqref{dec-lin-exp-dec} and the last two estimates, the claim follows.
\qed



\end{document}